\DeclareMathOperator*{\argmin}{argmin}
\DeclareMathOperator*{\sgn}{sgn}
\newcommand{\ds}{\displaystyle}
\newcommand{\nexto}{\kern -0.54em}
\newcommand{\dR}{{\rm {I\ \nexto R}}}
\newcommand{\dZ}{{\cal Z \kern -0.7em Z}}
\newcommand{\dC}{{\rm\hbox{C \kern-0.8em\raise0.2ex\hbox{\vrule
height5.4pt width0.7pt}}}}
\newcommand{\dQ}{{\rm\hbox{Q \kern-0.85em\raise0.25ex\hbox{\vrule
height5.4pt width0.7pt}}}}
\newcommand{\proofbox}{\hspace{\fill}{$\Box$}}
\newtheorem{lemma}{Lemma}
\newtheorem{theorem}{Theorem}
\newtheorem{corollary}{Corollary}
\newtheorem{remark}{Remark}
\newenvironment{proof}{Proof.}{\proofbox}
\begin{document}

\author{Authors}

\author{
C. Yal{\c c}{\i}n Kaya\footnote{School of Information Technology and Mathematical Sciences, University of South Australia, Mawson Lakes, S.A. 5095, Australia. E-mail: yalcin.kaya@unisa.edu.au\,.}
}

\title{\vspace{0mm}\bf Markov-Dubins Path via Optimal Control Theory}

\maketitle

\begin{abstract} 
{\noindent\sf Markov-Dubins path is the shortest planar curve joining two points with prescribed tangents, with a specified bound on its curvature.  Its structure, as proved by Dubins in 1957, nearly 70 years after Markov posed the problem of finding it, is elegantly simple: a selection of at most three arcs are concatenated, each of which is either a circular arc of maximum (prescribed) curvature or a straight line.  The Markov-Dubins problem and its variants have since been extensively studied in practical and theoretical settings.  A reformulation of the Markov-Dubins problem as an optimal control problem was subsequently studied by various researchers using the Pontryagin maximum principle and additional techniques, to reproduce Dubins' result.  In the present paper, we study the same reformulation, and apply the maximum principle, with new insights, to derive Dubins' result again.  We prove that abnormal control solutions do exist. We characterize these solutions, which were not studied adequately in the literature previously, as a concatenation of at most two circular arcs and show that they are also solutions of the normal problem.  Moreover, we prove that any feasible path of the types mentioned in Dubins' result is a stationary solution, i.e., that it satisfies the Pontryagin maximum principle.  We propose a numerical method for computing Markov-Dubins path.  We illustrate the theory and the numerical approach by three qualitatively different examples.}
\end{abstract}

\begin{verse} {\bf Key words.} {\sf Markov-Dubins path, Bounded curvature, Optimal control, Singular control, Bang--bang control, Abnormal optimal control problem.}
\end{verse}

\begin{verse} {\bf AMS subject classifications.} {\sf Primary 49J15, 49K15\ \ Secondary 65K10, 90C30}
\end{verse}

\pagestyle{myheadings}
\thispagestyle{plain}
\markboth{\sf\scriptsize C. Y. Kaya}{\sf\scriptsize Markov-Dubins Path via Optimal Control Theory\ \ by\ C. Y. Kaya}

\section{Introduction}

The problem of finding the shortest planar path of bounded curvature between two prescribed end-points and tangents in the plane was posed in 1889 by Andrey Andreyevich Markov~\cite{Markov1889}, in the context of railway design (also see \cite{KreNud1977}).  Back then, Markov studied only some of the specific instances of this problem.   Nearly 70 years later (in 1957), Lester Eli Dubins~\cite{Dubins1957} published a general solution to the problem for the first time, using geometric arguments.  {\em Dubins path}, as coined by many authors in the literature but which we refer to here as {\em Markov-Dubins path} in recognition of Markov's earlier contribution in~\cite{Markov1889}, is the shortest $C^1$ and piecewise-$C^2$ curve that is a concatenation of circular subarcs and a straight line.  Suppose that a circular arc is represented by $C$ and a straight line segment by $S$.  Dubins' elegant solution asserts that the sequence of concatenated arcs in such a shortest path can be of type $CSC$, $CCC$, or a subset thereof.  

Markov-Dubins path and its variants have since been extensively studied for optimal path planning of uninhabited aerial vehicles (UAVs)~\cite{GalDeu2014, GaoZheGon2016,WanWanTanZhoWei2015} and robots~\cite{TokKarIsl2014}. They have also been used for tunnelling in underground mines, where it is paramount to minimize the cost of excavating and operating a tunnel \cite{ChaBraRubTho2012, ChaBraRubTho2015}.   Markov-Dubins path and its generalizations have been an active area of research for decades now~\cite{AgrSac2004,AyaRub2016, BakTsi2013, IsaShi2014, Sussmann1995, Sussmann1997, SusTan1991, FraSch2004, ChiSig2005, MeyIsaShi2015, SigChi2006,  BoiCerLeb1991, BoiCerLeb1994, ReeShe1990, ShkLum2001}.

A straightforward exemplification of Markov-Dubins path is the shortest path of a car (modelled as a point mass) which goes only forwards at unit speed under the same constraints described above.  In 1990, Reeds and Shepp~\cite{ReeShe1990} considered a car which goes not only forwards but also backwards, as an extension of the Markov-Dubins problem.  This extension clearly allows reversals of the path, which constitute cusps.  Therefore, the shortest curve for the Reeds-Shepp car is no longer necessarily $C^1$, and the results are somewhat richer.  In 1991, Boissonnat, C{\'e}r{\'e}zo and Leblond~\cite{BoiCerLeb1991} (also see~\cite{BoiCerLeb1994}) used optimal control theory, as well as perturbation analysis of the solution trajectories, to derive the same results as those obtained by Reeds and Shepp.  Sussmann and Tang~\cite{SusTan1991} independently did the same in 1991, by using geometric optimal control theory and control synthesis.

In both \cite[Lemma~5]{BoiCerLeb1991} and \cite[Lemma~2]{SusTan1991}, the optimal control problem associated with the Reeds-Shepp car is proved to be {\em normal}, i.e., the multiplier of the objective functional is positive (non-zero). In \cite{BoiCerLeb1991}, however, normality is carried over to the case of the forward-moving car as a special case of the Reeds-Shepp car, and the results that are derived for the Reeds-Shepp car are reduced to those of Dubins.  Under the assumption that the optimal control problem is normal, the optimal curve types one gets indeed overlap with those of the result of Dubins;  however, it turns out that the problem may very well be {\em abnormal}, i.e., the multiplier of the objective functional may become zero.  Abnormality of the Markov-Dubins problem is observed in \cite[Remark~11]{SusTan1991}; however, abnormal solutions have not been studied explicitly up to now.  Therefore, there is need to characterize abnormal solutions adequately for completeness of the optimal control approach.

In the present paper, we formulate the Markov-Dubins problem as a time-optimal control problem as in \cite{BoiCerLeb1991} and \cite{SusTan1991} and apply the Pontryagin maximum principle.  We prove that there exist abnormal, as well as normal, solutions to the Markov-Dubins problem, and we characterize these.  We carry out perturbation analysis of the solution trajectories, for the normal and abnormal cases, so as to get the results reported by Dubins.

First, as in \cite{BoiCerLeb1991} and \cite{SusTan1991}, we show that the optimal control trajectories are comprised of bang and singular arcs such that a bang arc is associated with a circular arc $C$ and a singular arc with a straight line segment $S$.  Then we derive a differential equation for the switching function and construct the phase portraits of this differential equation for the normal and abnormal cases (see Figures~\ref{phase} and \ref{abnormal_phase}).  These phase portraits exhibit markedly different phase plane trajectories from one another, which help with the characterization of the possible types of solutions.  

From a simple lemma (Lemma~\ref{rho_sing}), as well as the abnormal phase portrait (Figure~\ref{abnormal_phase}), it becomes evident that an abnormal solution curve cannot contain a singular subarc. Next, by using perturbation analysis, we show that an abnormal optimal solution curve must be either of type $C$ or $CC$ (Lemma~\ref{abnormal}).  By using a similar perturbation analysis, we also show that a normal optimal solution curve cannot be of type $CCCC$ (Lemma~\ref{CCCC}).  The latter analysis is akin to that in \cite{BoiCerLeb1991}; however, the tools and details of our working are different.  A combination of all these results reproduce the earlier result given by Dubins (Theorem~\ref{Dubins}).  In other words, by using optimal control theory and perturbation analysis, we provide a (full) alternative proof of Dubins' result.

In another new result (Theorem~\ref{stationarity}), we state that any feasible path, i.e., any path satisfying the constraints of the optimal control formulation of the Markov-Dubins problem, which is of type $CSC$, $CCC$, or a subset thereof, is a {\em stationary} solution, i.e., it satisfies the Pontryagin maximum principle.  Stationarity of feasible solutions has computational implications as pointed in the next paragraph.  The result in Corollary~\ref{normal_abnormal_1} illustrates that abnormal Markov-Dubins path is not rare at all.  Corollary~\ref{normal_abnormal_2}, on the other hand, states that any abnormal path is a normal path, but not vice versa.

Since the solution structure for the optimal control problem is a concatenation of bang and singular arcs, we parameterize the problem with respect to the unknown terminal time and the switching times at which the solution curve switches from one subarc to the other.  We propose a model for switching-time computation in a similar fashion to those in~\cite{KayNoa2003,MauBueKimKay2005}, which converts the Markov-Dubins problem, which is an infinite-dimensional optimization problem, to an equivalent finite-dimensional one.  The transformed problem can then be solved using standard optimization methods and software.  It is well-known that most iterative methods for optimization would in general converge to a stationary solution, but not necessarily to a locally optimal one, let alone a globally optimal one.  So, by using standard optimization software, one would hope to get at best, by virtue of Theorem~\ref{stationarity}, one of the usually many feasible solutions of the Markov-Dubins problem.

Three example Markov-Dubins problems are studied.  In the first two, we construct all of the (normal) stationary solutions.  For the Markov-Dubins path of Example~1, we illustrate the construction of the switching function and the phase plane trajectories.  In Example~2, we draw six of all seven stationary curves.  Example~3 illustrates four stationary curves, including the optimal one, the Markov-Dubins path, which is abnormal.

A realistic generalization of the Markov-Dubins problem is the requirement that the path passes through a number of prescribed intermediate points, giving rise to an interpolation problem.  Although this generalization is beyond the scope of the present paper, the material is presented in such a way that it can be conveniently/simply extended to study what we call {\em the Markov-Dubins interpolation problem}.  The particular numerical approach we propose in this paper constitutes a crucial building block of a numerical method for this extension.

The paper is organized as follows.  In Section~2, we describe the Markov-Dubins problem and its reformulation as a time-optimal control problem.  In Section~3, we provide the preliminary results, including the abnormal and normal solutions, leading to Dubins' theorem.  The stationarity results are presented in Section~4.  In Section~5, we describe the numerical approach and present the numerical examples and experiments.  Section~6 concludes the paper, with a discussion and short descriptions of further work.

\section{Markov-Dubins Problem}

{\em Markov-Dubins path} is the shortest $C^1$ and piecewise-$C^2$ curve $z:[0,t_f]\longrightarrow\dR^2$ between two prescribed oriented points $p_0$ and $p_f$ at $0$ and $t_f$, respectively, in the plane, where the slopes at $p_0$ and $p_f$ are also prescribed, such that the curvature of the path $z(t)$ at almost every (a.e.) point is not greater than $a>0$.  Note that the parameter $t_f$ is unknown, and so it is also to be determined.

Recall that the {\em curvature} $\kappa(t)$ of a $C^2$ curve, parameterized with respect to its length, is defined as the length of its acceleration vector; in other words,
\[
\kappa(t) = \|\ddot{z}(t)\|\,, \quad\mbox{with\ } \|\dot{z}(t)\| = 1\,,
\]
where $\dot{z} = dz/dt$, $\ddot{z} = d^2z/dt^2$, and $\|\cdot\|$ is the Euclidean norm.  So, it is required that $\kappa(t) \le a$, for a.e.\ $t\in[0,t_f]$.  In practical terms, if we consider, for example, the motion of a vehicle in the plane, $1/\kappa(t)$ is nothing but the (instantaneous) {\em turning radius}.  So, in the Markov-Dubins problem, the turning radius is constrained to be at least $1/a$.  Finding the shortest curve requires minimization of the arc-length functional
\[
\int_{0}^{t_f}\|\dot{z}(t)\|\,dt = t_f\,,
\]
where we have used $\|\dot{z}(t)\| = 1$, for a.e.\ $t\in[0,t_f]$.  Note that this reconfirms parameterization of the curve with respect to its length.  The Markov-Dubins problem can then be expressed as
\[
\mbox{(P)}\left\{\begin{array}{rl}
\min &\ t_f \\[2mm]
\mbox{s.t.} &\ z(0) = p_0,\ z(t_f) = p_f,\\[2mm]
  &\ \dot{z}(0) = v_0,\ \dot{z}(t_f) = v_f,\\[2mm]
   &\ \|\ddot{z}(t)\|\le a\,,\ \ \|\dot{z}(t)\| = 1\,,\mbox{ for a.e. }  t\in[0,t_f]\,,
\end{array}\right.
\]
where $\|v_0\| = \|v_f\| = 1$.  Problem~(P) can equivalently be cast as an optimal control problem as follows. Let $z(t) := (x(t), y(t))\in\dR^2$, with $\dot{x}(t) := \cos\theta(t)$ and $\dot{y}(t) := \sin\theta(t)$, where $\theta(t)$ is the angle the velocity vector $\dot{z}(t)$ of the curve $z(t)$ makes with the horizontal.  These definitions verify that $\|\dot{z}(t)\| = 1$.  Moreover,
\[
\|\ddot{z}\|^2 = \ddot{x}^2+\ddot{y}^2 = \dot{\theta}^2\,.
\]
Therefore, $|\dot{\theta}(t)|$ is nothing but the curvature.  In fact, $\dot{\theta}(t)$ itself, which can be positive or negative, is referred to as the {\em signed curvature}.  For example, consider a vehicle travelling along a circular path.  If $\dot{\theta}(t) > 0$ then the vehicle travels in the counter-clockwise direction, i.e., it {\em turns left}, and if $\dot{\theta}(t) < 0$ then the vehicle travels in the clockwise direction, i.e., it {\em turns right}.

Let $u(t) := \dot\theta(t)$.  Suppose that the directions at the points $p_0$ and $p_f$ are denoted by the angles $\theta_0$ and $\theta_f$, respectively.  Problem~(P) can then be re-written as a {\em time-optimal} (or {\em minimum-time}) {\em control problem}, where $x$, $y$ and $\theta$ are the {\em state variables} and $u$ the {\em control variable}:
\[
\mbox{(Pc)}\left\{\begin{array}{rll}
\min &\ \ds t_f = \int_0^{t_f} (1)\, dt  & \\[4mm]
\mbox{s.t.} &\ \dot{x}(t) = \cos\theta(t)\,, & x(0) = x_0\,,\ 
              x(t_f) = x_f\,, \\[2mm] 
  &\ \dot{y}(t) = \sin\theta(t)\,, & y(0) = y_0\,,\ 
              y(t_f) = y_f\,,\\[2mm] 
  &\ \dot{\theta}(t) = u(t)\,, & \theta(0) = \theta_0\,,\ 
              \theta(t_f) = \theta_f\,,\\[2mm]
  & & |u(t)|\le a\,,\mbox{ for a.e. }  t\in[0,t_f]\,.
\end{array}\right.
\]

\section{Markov-Dubins Curves}

In this section, we apply the Pontryagin maximum principle to Problem~(Pc) and ultimately reproduce the result obtained by Dubins~\cite{Dubins1957}.

\subsection{Bang--bang and singular arcs}

Define the {\em Hamiltonian function} as in \cite{PonBolGamMis1962} for Problem~(Pc) as:
\begin{equation}  \label{Hamiltonian}
H(x,y,\theta, \lambda_0,\lambda_1,\lambda_2,\lambda_3,u) := \lambda_0
+ \lambda_1\,\cos\theta + \lambda_2\,\sin\theta + \lambda_3\,u\,,
\end{equation}
where $\lambda_0$ is a scalar (multiplier) parameter and $\lambda_i: [0,t_f]\to\dR$, $i = 1,2,3$, are the adjoint (or costate) variables.  Let
\[
H[t] := H(x(t),y(t),\theta(t),\lambda_0,\lambda_1(t),\lambda_2(t),\lambda_3(t),u(t))\,. 
\]
The adjoint variables are required to satisfy
\begin{eqnarray}
&& \dot{\lambda}_1(t) = -H_x[t] = 0\,, \label{adjoint1} \\[1mm]
&& \dot{\lambda}_2(t) = -H_y[t] = 0\,, \label{adjoint2} \\[1mm]
&& \dot{\lambda}_3(t) = -H_\theta[t] = \lambda_1(t)\,\sin\theta(t)
- \lambda_2(t)\,\cos\theta(t)\,, \label{adjoint3}
\end{eqnarray}
where $H_x = \partial H / \partial x$, etc.  By these definitions, the state and adjoint variables verify a Hamiltonian system in that, in addition to \eqref{adjoint1}--\eqref{adjoint3}, one has $\dot{x}(t) = H_{\lambda_1}[t]$, $\dot{y}(t) = H_{\lambda_2}[t]$ and $\dot{\theta}(t) = H_{\lambda_3}[t]$.  Note that
\eqref{adjoint1}--\eqref{adjoint2} imply that $\lambda_1(t) = \overline{\lambda}_1$ and $\lambda_2(t) = \overline{\lambda}_2$ for all $t\in[0,t_f]$, where $\overline{\lambda}_1$ and $\overline{\lambda}_2$ are constants.

Define new constants
\begin{equation}  \label{rhophi}
\rho := \sqrt{\overline{\lambda}_1^2 +
  \overline{\lambda}_2^2}\,,\qquad 
\tan\phi := \frac{\overline{\lambda}_2}{\overline{\lambda}_1}\,.
\end{equation}
Then \eqref{Hamiltonian} and \eqref{adjoint3} can respectively be re-written as
\begin{equation}  \label{Hamiltonian2}
H[t] = \lambda_0 + \rho\,\cos(\theta(t) - \phi) + \lambda_3(t)\,u(t)
\end{equation}
and 
\begin{equation}  \label{adjoint3a}
\dot{\lambda}_3(t) = \rho\,\sin(\theta(t) - \phi)\,.
\end{equation}

Next we state the Pontryagin maximum principle \cite[Theorem 1]{PonBolGamMis1962} for our setting as follows.  Suppose that $x,y,\theta\in W^{1,\infty}(0,t_f;\dR)$, $u\in L^\infty(0,t_f;\dR)$, and $t_f\in[0,M)$, where $M$ is large enough so that $t_f<M-\varepsilon$ with $\varepsilon>0$, solve Problem~(Pc).  Then
there exist a number $\lambda_0\ge0$ and functions $\lambda_i\in W^{1,\infty}(0,t_f;\dR)$, $i=1,2,3$, such that $\lambda(t ):= (\lambda_0,\lambda_1(t), \lambda_2(t), \lambda_3(t)) \neq \bf0$, for every $t\in[0,t_f]$, and, in addition to the state differential equations and other constraints given in Problem~(Pc) and the adjoint
differential equations \eqref{adjoint1}--\eqref{adjoint2} and \eqref{adjoint3a}, the following conditions hold:
\begin{eqnarray}
&& u(t)\in\argmin_{\|v\|\le a}
   H(x(t),y(t),\theta(t),\lambda_0,\lambda_1(t),\lambda_2(t),\lambda_3(t),v)\,,  
\label{control} 
  \\[1mm]
  && H[t] = 0\,.  \label{H_zero}
\end{eqnarray}
Using the definition in \eqref{Hamiltonian}, \eqref{control} can more simply be written as
\begin{equation}  \label{control2}
u(t)\in\argmin_{\|v\|\le a} \lambda_3(t)\,v
\end{equation}
which yields the optimal control as
\begin{equation}  \label{control3}
u(t) = \left\{\begin{array}{ll}
\ \ a\,, & \mbox{if}\  \lambda_3(t) < 0\,, \\[3mm]
-a\,, & \mbox{if}\ \lambda_3(t) > 0\,, \\[3mm]
\mbox{undetermined}\,, & \mbox{if}\ \lambda_3(t) = 0\,.
\end{array}\right.
\end{equation}
Furthermore, \eqref{Hamiltonian2} and \eqref{H_zero} give
\begin{equation}  \label{H_zero2}
\lambda_3(t)\,u(t) + \rho\,\cos(\theta(t) - \phi) + \lambda_0 = 0\,.
\end{equation}

The control $u(t)$ to be chosen for the case when $\lambda_3(t)=0$ for a.e.\ $t\in[\zeta_1,\zeta_2]\subset[0,t_f]$ is referred to as {\em singular control}, because \eqref{control2} does not yield any further information.  On the other hand, when $\lambda_3(t)\neq 0$ for a.e.\ $t\in[0,t_f]$, i.e., it is possible to have $\lambda_3(t)=0$ only for isolated values of $t$, the control $u(t)$ is said to be {\em nonsingular}.  It should be noted that, if $\lambda_3(\tau)=0$ only at an isolated point $\tau$, the optimal control at this isolated point can be chosen as $u(\tau) = -a$ or $u(\tau) = a$, conveniently.  If the control $u(t)$ is nonsingular, it will take on either the value $-a$ or $a$, the bounds on the control variable.  In this case, the control $u(t)$ is referred to as {\em bang--bang}.  Since the sign of $\lambda_3(t)$ determines the value of the optimal control $u(t)$, $\lambda_3$ is referred to as the {\em switching function}.

\begin{lemma}[Normality of Singular Control] \label{rho_sing} 
  Suppose that the optimal control $u(t)$ for Problem~{\em (Pc)} is singular over an interval $[\zeta_1,\zeta_2]\subset[0,t_f]$.  Then $\rho = \lambda_0 > 0$.
\end{lemma}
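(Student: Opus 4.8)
The plan is to exploit the two defining relations for a singular arc---the vanishing of the switching function and of its derivative---together with the transversality-type identity \eqref{H_zero2} that expresses $H[t]=0$, and then to rule out the degenerate case $\rho=0$ by invoking the nontriviality of the multiplier guaranteed by the maximum principle.

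First I would note that since $\lambda_3\in W^{1,\infty}(0,t_f;\dR)$ is absolutely continuous and $\lambda_3(t)=0$ for a.e.\ $t\in[\zeta_1,\zeta_2]$, in fact $\lambda_3(t)=0$ for \emph{every} $t$ in this interval, so that $\dot\lambda_3(t)=0$ for a.e.\ $t\in[\zeta_1,\zeta_2]$. Feeding this into the adjoint equation \eqref{adjoint3a} gives
\begin{equation*}
\rho\,\sin(\theta(t)-\phi)=0\,,\qquad t\in[\zeta_1,\zeta_2]\,,
\end{equation*}
and substituting $\lambda_3(t)=0$ into \eqref{H_zero2} yields
\begin{equation*}
\rho\,\cos(\theta(t)-\phi)+\lambda_0=0\,,\qquad t\in[\zeta_1,\zeta_2]\,.
\end{equation*}

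The crux---and the one place where care is needed---is eliminating the possibility $\rho=0$. If $\rho=0$, then by \eqref{rhophi} we have $\overline{\lambda}_1=\overline{\lambda}_2=0$, and the second displayed identity forces $\lambda_0=0$; together with $\lambda_3\equiv0$ on $[\zeta_1,\zeta_2]$ this makes the entire multiplier $\lambda(t)=(\lambda_0,\overline{\lambda}_1,\overline{\lambda}_2,\lambda_3(t))$ vanish on the interval, contradicting the requirement $\lambda(t)\neq\mathbf{0}$ for every $t$. Hence $\rho>0$. I expect this nontriviality argument to be the whole substance of the lemma; everything else is a short deduction.

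Finally, with $\rho>0$ the first identity gives $\sin(\theta(t)-\phi)=0$, so $\cos(\theta(t)-\phi)=\pm1$ on $[\zeta_1,\zeta_2]$. The second identity then reads $\lambda_0=-\rho\,\cos(\theta(t)-\phi)=\mp\rho$; since $\lambda_0\ge0$ and $\rho>0$, the value $\cos(\theta(t)-\phi)=+1$ is impossible, leaving $\cos(\theta(t)-\phi)=-1$ and therefore $\lambda_0=\rho>0$, as claimed. As a by-product the argument shows $\theta(t)$ is constant on the singular arc, which is consistent with such an arc corresponding to a straight line segment $S$.
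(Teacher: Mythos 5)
Your proof is correct and follows essentially the same route as the paper's: both use $\lambda_3=\dot\lambda_3=0$ on the singular interval together with \eqref{adjoint3a} and \eqref{H_zero2} to force $\cos(\theta-\phi)=-1$ and $\rho=\lambda_0$, and both invoke the nontriviality of the multiplier vector to exclude $\rho=\lambda_0=0$. The only difference is cosmetic: you dispose of the degenerate case $\rho=0$ up front, whereas the paper rules it out at the end, which is arguably a slightly cleaner ordering since it justifies dividing the discussion by the sign of $\cos(\theta-\phi)$.
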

\begin{proof}
Suppose that the optimal control $u(t)$ is singular for a.e.\ $t\in[\zeta_1,\zeta_2]\subset[0,t_f]$.  Then $\lambda_3(t) = \dot\lambda_3(t) = 0$ for a.e.\ $t\in[\zeta_1,\zeta_2]$. From \eqref{adjoint3a}, $\sin(\theta(t) - \phi) = 0$, which implies that $\cos(\theta(t) - \phi) = 1$ or $-1$.  Then, substituting $\lambda_3(t)=0$ and $\cos(\theta(t) - \phi) = -1$ into \eqref{H_zero2}, one gets $\rho = \lambda_0 \ge 0$.  It should be noted that $\cos(\theta(t) - \phi) = 1$ yields $\rho = -\lambda_0 \ge 0$, or $\lambda_0 \le 0$, which is not allowed by the maximum principle, unless $\lambda_0 = 0$.  Suppose that $\rho = \lambda_0 = 0$. Then, since $\lambda_3(t) = 0$, the adjoint variable vector $\lambda(t) = {\bf 0}$, which contradicts the maximum principle.  Therefore $\lambda_0 > 0$.
\end{proof}

\begin{remark} \label{rem:singular_normal} \rm The problems that yield $\lambda_0 = 0$ are referred to as {\em abnormal} in the optimal control theory literature, for which the necessary conditions in \eqref{control}--\eqref{H_zero} are independent of the objective functional $t_f$ and therefore insufficiently informative.  The problems that yield $\lambda_0 > 0$ are referred to as {\em normal}.  Lemma~\ref{rho_sing} above asserts that if the optimal path contains a singular arc, then Problem~(Pc) is normal.
\endproof
\end{remark}

\begin{lemma}[Singularity and Straight Line Segments] \label{singular}
Suppose that the optimal control $u(t)$ for Problem~{\em (Pc)} is singular over an interval $[\zeta_1,\zeta_2]\subset[0,t_f]$.  Then $\theta(t)$ is constant, i.e., $u(t) = 0$.
\end{lemma}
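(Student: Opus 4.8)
The plan is to extract, from the singularity hypothesis, enough pointwise information to pin down $\theta$. By the definition of singular control, $\lambda_3(t)=0$ for a.e.\ $t\in[\zeta_1,\zeta_2]$; since $\lambda_3\in W^{1,\infty}(0,t_f;\dR)$ is absolutely continuous, this in fact forces $\lambda_3(t)=0$ for \emph{every} $t\in[\zeta_1,\zeta_2]$, and hence $\dot\lambda_3(t)=0$ for a.e.\ such $t$. Substituting into the adjoint equation \eqref{adjoint3a} yields $\rho\,\sin(\theta(t)-\phi)=0$ for a.e.\ $t\in[\zeta_1,\zeta_2]$.

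Next I would invoke Lemma~\ref{rho_sing}, which applies precisely because the control is singular on this interval, to conclude that $\rho=\lambda_0>0$. In particular $\rho\neq 0$, so the previous identity reduces to $\sin(\theta(t)-\phi)=0$ for a.e.\ $t\in[\zeta_1,\zeta_2]$.

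The key step is then to upgrade this almost-everywhere statement to the constancy of $\theta$. Since $\theta\in W^{1,\infty}(0,t_f;\dR)$ is continuous, the map $t\mapsto\sin(\theta(t)-\phi)$ is continuous; being zero almost everywhere on the interval, it must be zero everywhere on $[\zeta_1,\zeta_2]$. Consequently $\theta(t)-\phi$ takes values in the discrete set $\{k\pi:k\in\dZ\}$ for every $t\in[\zeta_1,\zeta_2]$. A continuous function on the connected interval $[\zeta_1,\zeta_2]$ whose range lies in a set of isolated points must be constant; therefore $\theta(t)\equiv\theta(\zeta_1)$ on $[\zeta_1,\zeta_2]$. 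Differentiating, $u(t)=\dot\theta(t)=0$ for a.e.\ $t$ on the interval, which is the assertion.

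I expect the main obstacle to be making rigorous the passage from ``$\sin(\theta-\phi)=0$ a.e.'' to ``$\theta$ constant'': one must not merely conclude pointwise that $\theta(t)-\phi$ is some multiple of $\pi$, but also rule out jumps between distinct multiples, which is exactly where the continuity of $\theta$ and the connectedness of $[\zeta_1,\zeta_2]$ are used. An alternative route would differentiate \eqref{adjoint3a} once more to obtain $\rho\,\cos(\theta-\phi)\,u=0$ and then use $\cos(\theta(t)-\phi)=\pm1\neq0$ together with $\rho>0$ to force $u=0$; however, this requires justifying a second differentiation of $\lambda_3$ along the singular arc, so I would prefer the continuity argument above.
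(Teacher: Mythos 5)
Your proposal is correct and follows essentially the same route as the paper's own proof (specifically its first variant): use Lemma~\ref{rho_sing} to get $\rho>0$, deduce $\dot\lambda_3=0$ and hence $\sin(\theta(t)-\phi)=0$ from \eqref{adjoint3a}, and conclude that $\theta$ is constant. The only difference is that you carefully justify the final step (continuity of $\theta$ plus connectedness of $[\zeta_1,\zeta_2]$ ruling out jumps between distinct multiples of $\pi$), which the paper leaves implicit.
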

\begin{proof}
Suppose that the optimal control $u(t)$ is singular, i.e., $\lambda_3(t)=0$, for a.e.\ $t\in[\zeta_1,\zeta_2]\subset[0,t_f]$. Recall by Lemma~\ref{rho_sing} that $\rho > 0$.  The rest of the proof can be given in two alternative ways: \\
(i) For a.e.\ $t\in[\zeta_1,\zeta_2]$: since $\lambda_3(t)=0$, one also has that $\dot\lambda_3(t)=0$; in other words, from \eqref{adjoint3a}, $\sin(\theta(t) - \phi) = 0$, which implies that $\theta(t)$ is constant, i.e., $\dot{\theta}(t) = u(t) = 0$. \\
(ii) Substituting $\rho = \lambda_0 >0$ from Lemma~\ref{rho_sing} and $\lambda_3(t)=0$ into \eqref{H_zero2}, one gets $\cos(\theta(t) - \phi) = -1$, which implies that $\theta(t)$ is constant, i.e., $\dot{\theta}(t) = u(t) = 0$, for a.e.\ $t\in[\zeta_1,\zeta_2]$.
\end{proof}

\begin{remark}  \label{u_sgn} \rm
From \eqref{control3} and Lemma~\ref{singular}, one can simply write $u(t) = -a\,\sgn(\lambda_3(t))$, a.e. $t\in[0,t_f]$. \endproof
\end{remark}

\begin{lemma}  \label{lem:lambda3_DE}
The adjoint variable $\lambda_3$ for Problem~{\em (Pc)} solves the differential equation
\begin{equation} \label{lambda3_DE} 
\dot{\lambda}_3^2(t) + \left(a\,|\lambda_3(t)| - \lambda_0\right)^2 = \rho^2\,.
\end{equation}
\end{lemma}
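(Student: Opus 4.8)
The plan is to derive \eqref{lambda3_DE} by combining the two expressions that encode $\sin(\theta(t)-\phi)$ and $\cos(\theta(t)-\phi)$ and then invoking the Pythagorean identity. Equation \eqref{adjoint3a} already gives $\dot{\lambda}_3(t)=\rho\,\sin(\theta(t)-\phi)$, so the task reduces to producing a matching expression for $\rho\,\cos(\theta(t)-\phi)$ in terms of $\lambda_3$, $\lambda_0$ and $a$.

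First I would rewrite the stationarity condition \eqref{H_zero2}. Using the compact form of the optimal control from Remark~\ref{u_sgn}, namely $u(t)=-a\,\sgn(\lambda_3(t))$, the product term becomes $\lambda_3(t)\,u(t)=-a\,\lambda_3(t)\,\sgn(\lambda_3(t))=-a\,|\lambda_3(t)|$, valid for a.e.\ $t\in[0,t_f]$ (and trivially when $\lambda_3(t)=0$, where both sides vanish). Substituting this into \eqref{H_zero2} yields $\rho\,\cos(\theta(t)-\phi)=a\,|\lambda_3(t)|-\lambda_0$.

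Next I would square the two relations $\dot{\lambda}_3(t)=\rho\,\sin(\theta(t)-\phi)$ and $\rho\,\cos(\theta(t)-\phi)=a\,|\lambda_3(t)|-\lambda_0$ and add them, so that the right-hand side collapses to $\rho^2\bigl(\sin^2(\theta(t)-\phi)+\cos^2(\theta(t)-\phi)\bigr)=\rho^2$, giving \eqref{lambda3_DE}. Since $\lambda_3\in W^{1,\infty}(0,t_f;\dR)$ is (absolutely) continuous and the right-hand side $\rho^2$ is constant, the identity, established for a.e.\ $t$, extends to all $t\in[0,t_f]$.

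I do not anticipate a genuine obstacle here; the one point requiring mild care is the handling of the absolute value, i.e.\ confirming that $\lambda_3(t)\,u(t)=-a\,|\lambda_3(t)|$ holds uniformly across bang arcs, singular arcs, and isolated switching points. This is the case because the sign convention $u(t)=-a\,\sgn(\lambda_3(t))$ and the singular value $u=0$ are mutually consistent precisely where $\lambda_3=0$.
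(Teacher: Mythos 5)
Your proposal is correct and follows essentially the same route as the paper: substitute $u(t)=-a\,\sgn(\lambda_3(t))$ into \eqref{H_zero2} to get $\rho\,\cos(\theta(t)-\phi)=a\,|\lambda_3(t)|-\lambda_0$, then combine with \eqref{adjoint3a} via the Pythagorean identity. The extra care you take with the absolute value at singular arcs and switching points is a harmless refinement of the paper's argument, not a departure from it.
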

\begin{proof}
  From \eqref{adjoint3a},
\begin{equation} \label{eqnA} 
\dot{\lambda}_3^2(t) = \rho^2\,\sin^2(\theta(t) - \phi) = \rho^2 -
\rho^2\,\cos^2(\theta(t) - \phi)\,.
\end{equation}
Using $u(t) = -a\,\sgn(\lambda_3(t))$ in \eqref{H_zero2}, one gets
\[
\rho\,\cos(\theta(t) - \phi) = a\,|\lambda_3(t)| - \lambda_0\,.
\]
Substituting this into the right-hand side of \eqref{eqnA} and rearranging give \eqref{lambda3_DE}.
\end{proof}

In the rest of the paper, we will at times not show dependence of variables on $t$ for clarity of presentation.

\begin{figure}
\vspace*{-10mm}
\begin{center}
\psfrag{L}{$\lambda_3$}
\psfrag{Ld}{$\dot{\lambda}_3$}
\psfrag{b}{\footnotesize$-\lambda_0/a$}
\psfrag{c}{\footnotesize\hspace*{0.2mm} $\lambda_0/a$}
\psfrag{u1}{\footnotesize $u(t) = a$}
\psfrag{u2}{\footnotesize $u(t) = -a$}
\[\includegraphics[width=110mm]{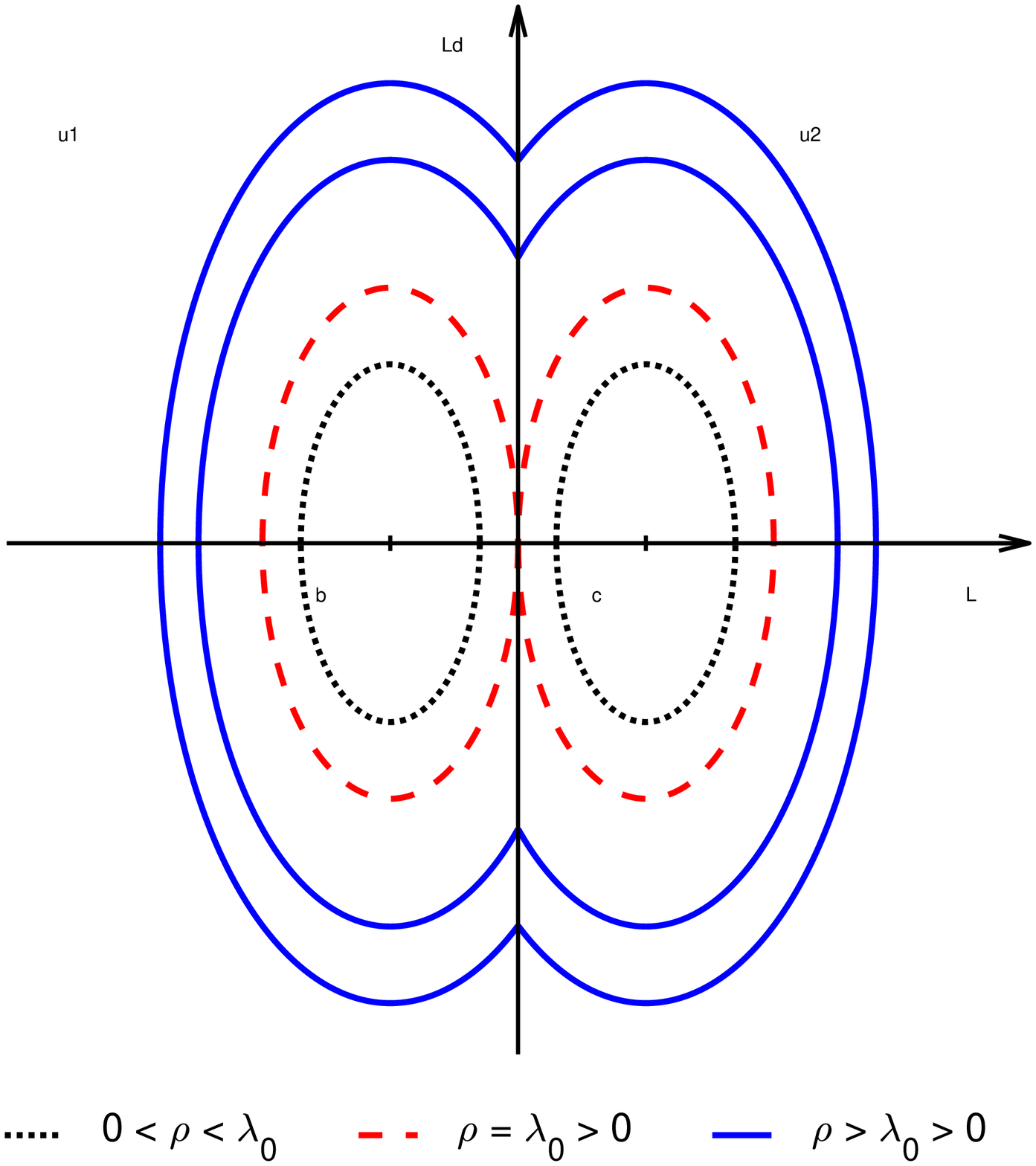}\]
\end{center}
\vspace*{-10mm}
\caption{\small\sf Phase portrait of \eqref{lambda3_DE} -- the normal case, $\lambda_0 > 0$.}
\label{phase}
\end{figure}

\begin{remark}[Normal Phase Portrait] \label{rem:ellipses} \rm 
Note that the differential equation \eqref{lambda3_DE} is given in terms of the {\em phase variables} $\lambda_3$ and $\dot{\lambda}_3$, and can be put into the form
\begin{equation} \label{lambda3_DE_ellipse} 
\left(\lambda_3 \pm \frac{\lambda_0}{a}\right)^2 +
\frac{\dot\lambda_3^2}{a^2} = \frac{\rho^2}{a^2}\,,
\end{equation}
where the ``$+$" sign in the first square term stands for the case when $\lambda_3<0$ and the ``$-$" sign for $\lambda_3>0$. Equation~\eqref{lambda3_DE_ellipse} clearly tells us that, for $\lambda_0 > 0$, the trajectories in the {\em phase plane} for $\lambda_3$ (the $\lambda_3$$\dot{\lambda}_3$-plane) will be pieces or concatenations of pieces of concentric ellipses centred at $\lambda_3 = -\lambda_0/a$ (for $u(t) = a$) and $\lambda_3 = \lambda_0/a$ (for $u(t) = -a$), as shown in Figure~\ref{phase}.  Based on the {\em phase plane diagram}, also referred to as the {\em phase portrait}, of $\lambda_3$ depicted in Figure~\ref{phase}, the following observations are made.
\begin{itemize}
\item[(i)] When $\rho>\lambda_0>0$ the trajectories are concatenations of (pieces of) ellipses, examples of which are shown by (dark blue) solid curves in Figure~\ref{phase}.  The ellipses are concatenated at the switching points $(0,\sqrt{\rho^2 - \lambda_0^2})$ and $(0,-\sqrt{\rho^2 - \lambda_0^2})$, where the value of the bang--bang control $u(t)$ switches from $a$ to $-a$ or from $-a$ to $a$, respectively.  The concatenated ellipses cross the $\lambda_3$-axis at two points, $(\lambda_0+\rho)/a$ and $-(\lambda_0+\rho)/a$.  One can promptly deduce from the diagram that if the bang--bang control has two switchings, the second arc must have a length strictly greater than $\pi/a$.  The diagram, however, does not tell as to how many switchings optimal control must have.
\item[(ii)] Recall, by Lemma~\ref{rho_sing}, that $\rho = \lambda_0 > 0$ for singular control.  The case when only a part of the trajectory is singular, referred to as a {\em bang--singular} trajectory, is represented by the two unique (red) dashed elliptic curves in Figure~\ref{phase}.  Note that singular control takes place only at the origin $(0,0)$ of the phase plane.  At any other point, the control trajectory is of bang--bang type.
\item[(iii)] For the case when $0 < \rho < \lambda_0$, example elliptic trajectories are shown with (black) dotted curves in Figure~\ref{phase}.  The trajectories cross the $\lambda_3$-axis at four distinct points $(\lambda_0\pm\rho)/a$ and $-(\lambda_0\pm\rho)/a$; however, they no longer intercept the $\dot{\lambda}_3$-axis; therefore they represent bang--bang control with no switchings, i.e., either $u(t) = a$ for all $t\in[0,t_f]$ or $u(t) = -a$ for all $t\in[0,t_f]$. \endproof
\end{itemize}
\end{remark}

\begin{lemma} \label{nonsingular}
Suppose that optimal control $u(t)$ for Problem~{\em (Pc)} is nonsingular over an interval $[\zeta_3,\zeta_4]\subset[0,t_f]$.  Then 
\begin{equation} \label{lambda3} 
|\lambda_3(t)| = \frac{1}{a}\left[\rho\,\cos(\theta(t) - \phi) + \lambda_0\right]\,,\ \ \mbox{ for a.e.\ } t\in[\zeta_3,\zeta_4]\subset[0,t_f]\,. 
\end{equation}
\end{lemma}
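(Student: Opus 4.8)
The plan is to read off the claimed identity directly from the zero-Hamiltonian condition \eqref{H_zero2}, using the explicit sign characterization of the optimal control on a nonsingular arc. Recall that the maximum principle forces $H[t]=0$, which after introducing $\rho$ and $\phi$ became \eqref{H_zero2}, namely $\lambda_3(t)\,u(t) + \rho\,\cos(\theta(t)-\phi) + \lambda_0 = 0$. The only quantity in this relation that still depends on the (possibly unresolved) control is the product $\lambda_3(t)\,u(t)$, so the whole task reduces to evaluating this product on the nonsingular interval $[\zeta_3,\zeta_4]$.

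First I would invoke Remark~\ref{u_sgn}, which supplies $u(t) = -a\,\sgn(\lambda_3(t))$ for a.e.\ $t\in[0,t_f]$, and hence in particular for a.e.\ $t\in[\zeta_3,\zeta_4]$. Because the arc is nonsingular, $\lambda_3(t)\neq 0$ for a.e.\ $t\in[\zeta_3,\zeta_4]$, so $\sgn(\lambda_3(t))\in\{-1,+1\}$ is well defined there and satisfies $\lambda_3(t)\,\sgn(\lambda_3(t)) = |\lambda_3(t)|$. Substituting $u(t)=-a\,\sgn(\lambda_3(t))$ then gives $\lambda_3(t)\,u(t) = -a\,|\lambda_3(t)|$ for a.e.\ $t\in[\zeta_3,\zeta_4]$.

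Finally I would insert this into \eqref{H_zero2} to obtain $-a\,|\lambda_3(t)| + \rho\,\cos(\theta(t)-\phi) + \lambda_0 = 0$, solve for $|\lambda_3(t)|$, and divide by $a>0$ to arrive at \eqref{lambda3} exactly. There is essentially no analytic obstacle: the argument is a one-line substitution into an already-established identity. The only point deserving care is the ``a.e.'' qualifier, i.e., that nonsingularity guarantees $\lambda_3$ vanishes at most on a set of measure zero in $[\zeta_3,\zeta_4]$, so that $\sgn(\lambda_3)$ — and therefore the step $\lambda_3\,\sgn(\lambda_3)=|\lambda_3|$ — is valid almost everywhere; the conclusion \eqref{lambda3} is correspondingly stated with the matching ``for a.e.'' clause.
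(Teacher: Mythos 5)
Your proposal is correct and matches the paper's own proof, which likewise substitutes $u(t) = -a\,\sgn(\lambda_3(t))$ from Remark~\ref{u_sgn} into \eqref{H_zero2} and rearranges. Your additional care with the a.e.\ qualifier and the well-definedness of $\sgn(\lambda_3)$ on a nonsingular arc is a sound elaboration of the same one-line argument.
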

\begin{proof}
Substitution of\ \ $u(t) = -a\,\sgn(\lambda_3(t))$\ \ into \eqref{H_zero2} and rearranging yield the required expression. 
\end{proof}

\begin{lemma}[Nonsingular Curves] \label{rho_nonsing}
Consider Problem~{\em (Pc)} and the necessary conditions of optimality for it. \\[-6mm]
\begin{enumerate}
\item[(a)] If $\rho=0$, then $\lambda_0 > 0$ and either $u(t) = a$ or $u(t) = -a$, for all $t\in[0,t_f]$.
\item[(b)] If $\rho > 0$ and $\rho\neq\lambda_0$, then $\lambda_0 \ge 0$ and $u(t)$ is of bang--bang type.
\end{enumerate}
\end{lemma}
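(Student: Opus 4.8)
The plan is to treat the two assertions separately, exploiting the fact that $\rho=0$ forces $\lambda_3$ to be constant in part~(a), while part~(b) is essentially the contrapositive of Lemma~\ref{rho_sing}.

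For part~(a), I would first note that $\rho=0$ forces $\overline{\lambda}_1=\overline{\lambda}_2=0$ by the definition in \eqref{rhophi}, and that \eqref{adjoint3a} then gives $\dot{\lambda}_3(t)\equiv 0$, so $\lambda_3$ is a constant. Next I would substitute $\rho=0$ together with $u(t)=-a\,\sgn(\lambda_3(t))$ (Remark~\ref{u_sgn}) into the Hamiltonian identity \eqref{H_zero2} to obtain $a\,|\lambda_3(t)|=\lambda_0$. The crucial step is then to rule out $\lambda_0=0$: if $\lambda_0=0$ we would have $\lambda_3\equiv 0$, and since already $\overline{\lambda}_1=\overline{\lambda}_2=0$, the entire multiplier vector $\lambda(t)$ would vanish, contradicting the nontriviality requirement of the maximum principle. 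Hence $\lambda_0>0$, and $|\lambda_3|=\lambda_0/a>0$ is a nonzero constant; by continuity of $\lambda_3$ this fixes its sign, so $u(t)=-a\,\sgn(\lambda_3)$ equals $a$ or $-a$ for all $t\in[0,t_f]$, as claimed.

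For part~(b), the first claim $\lambda_0\ge 0$ is immediate, since it is part of the conclusion of the maximum principle as stated (and, notably, the abnormal case $\lambda_0=0$ is not excluded here). For the bang--bang claim I would argue by contradiction: if the control were singular on some subinterval $[\zeta_1,\zeta_2]\subset[0,t_f]$, then Lemma~\ref{rho_sing} would force $\rho=\lambda_0$, contradicting the hypothesis $\rho\neq\lambda_0$. Therefore no singular subarc can occur, i.e.\ $\lambda_3(t)\neq 0$ for a.e.\ $t$, which is precisely the definition of the bang--bang (nonsingular) case.

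I expect the only genuinely delicate point to be the invocation of the nontriviality condition in part~(a) to exclude $\lambda_0=0$; everything else is either a direct substitution into \eqref{H_zero2} or a one-line appeal to the already-established Lemma~\ref{rho_sing}.
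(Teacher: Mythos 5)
Your proof is correct and follows essentially the same route as the paper: part (a) rests on substituting $u(t)=-a\,\sgn(\lambda_3(t))$ into \eqref{H_zero2} and invoking the nontriviality of the multiplier vector to exclude $\lambda_0=0$, and part (b) is the contrapositive of Lemma~\ref{rho_sing}. The only cosmetic difference is that the paper treats the abnormal case $\lambda_0=0$ of part (b) separately via the phase-plane relation \eqref{abnormal_eqn} (which also shows the zeros of $\lambda_3$ are transversal), whereas your single appeal to the contrapositive of Lemma~\ref{rho_sing} already covers both cases.
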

\begin{proof}
(a) Suppose that $\rho = 0$.  Then, from \eqref{H_zero2} and Remark~\ref{u_sgn}, $-\lambda_0 = \lambda_3(t)\,u(t) = -a\,|\lambda_3(t)| \le 0$, i.e., $\lambda_0\ge 0$.  Suppose further that $\lambda_0 = 0$.  Then we have $\lambda_3(t)\,u(t) = 0$, which means that either $\lambda_3(t) = 0$ or $u(t) = 0$, for a.e. $t\in[0,t_f]$.  If $\lambda_3(t) = 0$, then the vector of adjoint variables $\lambda(t) = {\bf 0}$, for a.e. $t\in[0,t_f]$, which is not permitted by the Pontryagin maximum principle.  Therefore $\lambda_3(t)\neq 0$.  Then, by \eqref{control3}, $u(t)\neq 0$, either.  Therefore $\lambda_0\neq 0$, namely that $\lambda_0 > 0$.  Now that $|\lambda_3(t)| = \lambda_0 / a$, $\lambda_3(t)$ is a nonzero constant, i.e., $u(t)$ is either $a$ or $-a$, for all $t\in[0,t_f]$. \\
(b) Suppose that $\rho > 0$ and $\rho\neq\lambda_0$.  We need to examine the normal and abnormal cases, separately. \\
(i) The normal case, $\lambda_0 >0$\,: We have that $0 < \rho\neq\lambda_0 > 0$.  The contrapositive of Lemma~\ref{rho_sing} states that if $0 < \rho\neq\lambda_0 > 0$ or $\rho = \lambda = 0$ then optimal control is bang--bang, which furnishes the proof for this case.  \\
(ii) The abnormal case, $\lambda_0 =0$\,: Equation \eqref{lambda3_DE} reduces to
\begin{equation}  \label{abnormal_eqn}
a^2\,\lambda_3^2 + \dot\lambda_3^2 = \rho^2\,.
\end{equation}
If $\lambda_3 =0$, then, by Equation~\eqref{abnormal_eqn}, $\dot\lambda_3 = \pm\rho \neq 0$, implying that singular optimal control is not possible.  Therefore, the optimal control is of bang--bang type.
\end{proof}

\begin{remark}  \rm
The constant optimal control in Lemma~\ref{rho_nonsing}(a), $u(t) = a$ or $u(t) = -a$, and the associated $|\lambda_3(t)| = \lambda_0 / a$, can be viewed graphically as the limiting case when $\rho\to 0$ in Figure~\ref{phase}. \endproof
\end{remark}

\begin{remark}[Abnormal Phase Portrait] \label{rem:abnormal} \rm 
Lemma~\ref{rho_nonsing}(b) implies that the optimal control formulation of the Markov-Dubins problem (Pc) can be abnormal, i.e., $\lambda_0 = 0$ is possible.  The phase portrait of the dynamical system in \eqref{abnormal_eqn}, which is depicted in Figure~\ref{abnormal_phase} for various values of $\rho$, is now comprised of concentric ellipses centred at $(0,0)$, crossing the $\dot{\lambda}_3$-axis at $\rho$ and $-\rho$, where switchings between $u(t) = a$ and $u(t) = -a$ may occur, giving rise to bang--bang control.
\begin{figure}
\vspace*{-10mm}
\begin{center}
\psfrag{L}{$\lambda_3$}
\psfrag{Ld}{\hspace*{-1mm}$\dot{\lambda}_3$}
\psfrag{u1}{\footnotesize $u(t) = a$}
\psfrag{u2}{\footnotesize $u(t) = -a$}
\[\includegraphics[width=70mm]{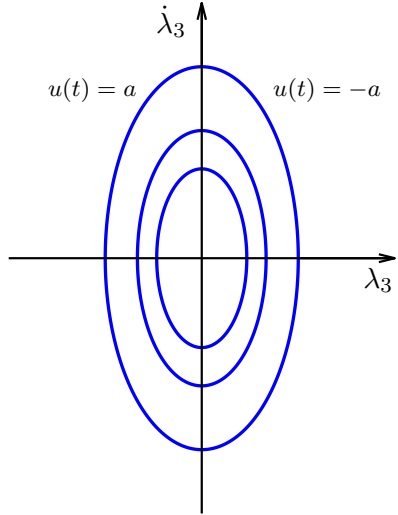}\]
\end{center}
\vspace*{-15mm}
\caption{\small\sf Phase portrait of \eqref{lambda3_DE} -- the abnormal case, $\lambda_0 = 0$.}
\label{abnormal_phase}
\end{figure}
The phase portrait also implies that the length of any bang-arc is at most $\pi/a$.  If the bang--bang control has two switchings then the length of the second arc is exactly $\pi/a$.  The portrait does not actually tell as to what the maximum number of switchings could be.  We will answer this question later in Lemma~\ref{abnormal}. \endproof
\end{remark}

We will refer to a solution to Problem~(Pc) with $\lambda_0 = 0$ as an {\em abnormal optimal solution}, and a solution with $\lambda_0 > 0$ a {\em normal optimal solution}. 

By Remarks~\ref{rem:ellipses}--\ref{rem:abnormal}, an optimal path, normal or abnormal, will in general be a concatenation of straight lines (i.e., singular arcs, where $u(t) = 0$) and circular arcs (i.e., nonsingular arcs, where $u(t) = a$ or $-a$).  In a solution trajectory, we will denote a straight line segment by an $S$ and a circular arc segment of curvature $a$ (or, turning radius $1/a$) by a $C$, resulting in descriptions of {\em optimal paths} to be {\em of type}, for example, $CSCC\cdots$, $SCS\cdots$, etc.

\begin{lemma}[Straight Line Segment] \label{straight} 
  If an optimal path for Problem~{\em (Pc)} contains a straight line segment $S$, then it is of type $CSC$, $CS$, $SC$ or $S$.
\end{lemma}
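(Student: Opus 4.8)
The plan is to exploit the fact that the presence of a straight line segment forces the problem into the very special bang--singular regime $\rho=\lambda_0>0$, and then to read off the admissible concatenations directly from the phase portrait in Figure~\ref{phase}, invoking optimality only to discard redundant full loops.

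First I would fix the regime. Since an $S$ segment is a singular arc, Lemma~\ref{rho_sing} gives $\rho=\lambda_0>0$, and the argument in the proof of Lemma~\ref{singular} shows that along the singular arc $\lambda_3\equiv\dot\lambda_3\equiv 0$; that is, in the $\lambda_3\dot\lambda_3$-plane the phase point rests at the origin. Substituting $\rho=\lambda_0$ into \eqref{lambda3_DE_ellipse}, the two bang ellipses become
\[
\left(\lambda_3\pm\frac{\lambda_0}{a}\right)^2+\frac{\dot\lambda_3^2}{a^2}=\frac{\lambda_0^2}{a^2}\,,
\]
so the ellipse for $u=a$ lies in $\lambda_3\le 0$ and the one for $u=-a$ in $\lambda_3\ge 0$, each meeting the line $\lambda_3=0$ (the $\dot\lambda_3$-axis) only at the origin. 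Hence $\lambda_3(t)=0$ can hold only when the phase point is at the origin, and at every such instant one also has $\dot\lambda_3=0$.

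Next I would translate this into a statement about junctions. Every transition between consecutive arcs --- whether a switch between two bang arcs (where $\lambda_3$ changes sign) or a transition into or out of the singular arc --- occurs exactly where $\lambda_3=0$, hence only at the origin. Consequently any arc that is neither the first nor the last arc of the path (an \emph{interior} arc) must have both of its endpoints at the origin. An interior bang arc therefore leaves the origin along one ellipse and, since the origin is the only point of that ellipse on the axis $\lambda_3=0$, can return to it only after a complete revolution; by the constant-curvature dynamics $u=\pm a$ this is a circular arc of turning angle $2\pi$, i.e.\ a full circle that returns the curve to the same position and tangent.

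The main obstacle, and the only place optimality enters, is ruling out such interior arcs. Here I would argue that excising a full circle produces a feasible path (same endpoints, same prescribed tangents, curvature bound intact) of strictly smaller length $t_f$, contradicting optimality; the same excision removes any bang arc separating two singular arcs (which would itself be a full circle), so two distinct singular arcs cannot coexist, and it disposes of every interior bang arc, leaving at most one singular arc with at most one bang arc before it and at most one after it. This yields precisely the admissible types $CSC$, $CS$, $SC$, and $S$, completing the proof. The delicate points to get right will be the careful verification that an origin-to-origin bang arc must be a full $2\pi$ circle (rather than a shorter arc) and the bookkeeping that merges adjacent singular arcs after the full circles are excised.
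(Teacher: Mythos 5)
Your proof is correct and follows essentially the same route as the paper's: both arguments put the switching function on the tangent ellipses of Figure~\ref{phase} via Lemma~\ref{rho_sing}, observe that $\lambda_3$ can vanish only at the origin of the phase plane so that any origin-to-origin bang arc is a full circle, and then excise that redundant circle to contradict optimality. Your formulation in terms of forbidding \emph{interior} bang arcs is a slightly more systematic packaging of the paper's case analysis (which forbids the factors $SCS$, $SCC$ and $CCS$), and it supplies the details the paper dispatches with ``similar arguments.''
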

\begin{proof}
It suffices to show that an optimal path cannot contain a curve segment of any of the types $SCS$, $SCC$, and $CCS$.  Suppose that an optimal path contains the line segment $S$.  Then, the switching function $\lambda_3$ of an optimal path containing $S$ will follow the (red) dashed trajectory in the phase portrait in Figure~\ref{phase}, since it is the only trajectory which passes through the origin, where $\lambda_3 = \dot{\lambda}_3 = 0$.  Suppose that the optimal path contains a curve of type $SCS$.  Then one traces one of the two dashed ellipses passing through the origin in the phase portrait, and comes back to the origin, meaning that one whole ellipse has been traced.  In other words, the circular segment $C$ completes one full cycle, which obviously is not optimal since a full circular arc is in itself redundant.  Therefore, the optimal path cannot contain a curve of type $SCS$.  Through similar arguments (again using the non-optimality of a full circular segment), one can easily conclude that a curve of type $SCC$ or type $CCS$ in the optimal path is not optimal either, and so an optimal path cannot contain them.
\end{proof}

\subsection{Abnormal optimal solution}

By Lemmas~\ref{rho_sing} and \ref{rho_nonsing} (also see Remark~\ref{rem:abnormal}), only a bang--bang optimal solution can be abnormal.  We characterize these solutions in Lemma~\ref{abnormal} below.  In what follows, we refer to an abnormal (bang--bang) solution to Problem~(Pc) as an {\em abnormal optimal path}.

\begin{lemma}[Abnormal Markov-Dubins Curves] \label{abnormal} 
An abnormal optimal path for Problem~{\em (Pc)} is either of type $CC$ or $C$, with respective lengths of at most $2\pi/a$ and $\pi/a$.
\end{lemma}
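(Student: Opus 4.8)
The plan is to first pin down the fine structure forced by abnormality, and only then use a shortening (perturbation) argument to bound the number of arcs. Since the solution is abnormal, $\lambda_0=0$; by Lemma~\ref{rho_nonsing}(a) this forces $\rho>0$ (if $\rho=0$ the problem is normal), and by Lemma~\ref{rho_sing} (or directly from Remark~\ref{rem:abnormal}) no singular arc can occur, so the control is bang--bang and the path is a concatenation of circular arcs $C$ only. Substituting $u=-a\,\sgn(\lambda_3)$ from Remark~\ref{u_sgn} into \eqref{H_zero2} with $\lambda_0=0$ gives $\cos(\theta(t)-\phi)=(a/\rho)\,|\lambda_3(t)|\ge 0$, so $\theta(t)-\phi$ is confined to $[-\pi/2,\pi/2]$ for the whole path, and a switching ($\lambda_3=0$) occurs exactly where $\theta-\phi=\pm\pi/2$.

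From this I read off the arc lengths directly. Along any arc $\dot\theta=\pm a$, so the length of an arc equals $1/a$ times the variation of $\theta$. An \emph{interior} arc (one bounded by two switchings) carries $\theta-\phi$ from $+\pi/2$ to $-\pi/2$ or vice versa, a variation of exactly $\pi$, hence has length exactly $\pi/a$; the first and last arcs cover a sub-range of $[-\pi/2,\pi/2]$ and so have length at most $\pi/a$. Consequently a type-$C$ path has length $\le\pi/a$ and a type-$CC$ path has length $\le 2\pi/a$, which are the stated bounds. It therefore only remains to prove that an abnormal optimal path has at most one switching.

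To this end I argue by contradiction: suppose the path has two or more switchings. Then it contains an interior arc $A$ of length exactly $\pi/a$ --- a half-circle --- flanked by two oppositely turning arcs $A^-$ and $A^+$ (the curvature sign flips at each switching). At the two switchings the velocity directions are $\theta-\phi=\pm\pi/2$, i.e.\ antiparallel, and the chord of the half-circle is the diameter, of length $2/a$, perpendicular to both tangents. I would then produce an explicit feasible variation of the three-arc block $A^-AA^+$ that keeps its initial and terminal points and tangents fixed while strictly decreasing its length; since these data are shared with the rest of the path, this shortens the whole curve and contradicts optimality. This is the same mechanism, and the same style of computation, used to exclude type $CCCC$ in the normal case (Lemma~\ref{CCCC}) and is analogous to the perturbation analysis of \cite{BoiCerLeb1991}. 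Because any path with three or more switchings likewise contains such a half-circle flanked by opposite arcs, ruling out the two-switching block rules out all cases, leaving only $C$ and $CC$.

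The main obstacle is precisely this shortening step. In angle coordinates a three-arc block has three parameters matched by the three boundary conditions (two for position, one for the terminal angle), so the configuration is rigid and no first-order variation survives from the turning angles alone; the room to move comes entirely from the half-circle degeneracy, where the antiparallel tangents allow the middle arc to be translated along the common tangent direction and reconnected. Exhibiting this variation and verifying that the length change is strictly negative --- in contrast to the normal case, where an optimal middle arc must exceed $\pi/a$ and the same move is blocked --- is the delicate computation on which the proof turns.
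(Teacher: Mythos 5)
Your structural analysis is correct and matches the paper's: abnormality forces $\rho>0$, excludes singular arcs (Lemma~\ref{rho_sing}), and the identity $\rho\cos(\theta-\phi)=a\,|\lambda_3|\ge 0$ obtained from \eqref{H_zero2} with $\lambda_0=0$ confines $\theta-\phi$ to $[-\pi/2,\pi/2]$, so that every interior bang arc has length exactly $\pi/a$ and the end arcs at most $\pi/a$; this is exactly the content of the abnormal phase portrait (Remark~\ref{rem:abnormal}) and correctly yields the stated length bounds once the arc count is settled. The problem is that the arc count is never settled: the entire burden of the lemma is to exclude two or more switchings, i.e.\ to show a $CCC$ block is not optimal, and at precisely that point you write that you \emph{would} produce a length-decreasing feasible variation of the block $A^-AA^+$ and then, in your final paragraph, concede that exhibiting this variation and verifying that the length strictly decreases is ``the delicate computation on which the proof turns.'' That computation is the proof; without it you have a plan, not an argument. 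Note also that the mechanism you gesture at (translating the half-circle along the common tangent and ``reconnecting'') is not obviously a feasible curvature-bounded deformation as stated --- the reconnection is where all the work lies.

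For comparison, the paper resolves this step with a concrete construction (Figure~\ref{abnormal_proof}): back up along the first arc by a small angle $\delta$, leave along the tangent line, traverse two straight segments of length $c/\sin\delta$ each joined by a circular arc of radius $1/a$ subtending $2\beta$ with $\beta=\pi/2-\delta$, and rejoin the third arc symmetrically. The length deficit works out to $\eta(\delta)=4\bigl(\delta-\sin\delta/(1+\cos\delta)\bigr)$ with $\eta(0)=0$ and $\eta'(0)=2>0$, so the perturbed curve is strictly shorter for small $\delta>0$; feasibility is immediate since straight segments and radius-$1/a$ arcs satisfy the curvature bound. A first-order variation does survive here (contrary to your rigidity remark, which applies to the normal case where the middle arc exceeds $\pi/a$), and this is what makes the abnormal $CCC$ exclusion easier than Lemma~\ref{CCCC}, where the first derivative vanishes and one must go to second order. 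To complete your proof you would need to supply this construction, or an equivalent one, together with the sign verification.
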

\begin{proof}
Suppose that one has an abnormal solution to Problem~(Pc).  Then $\lambda_0 = 0$ and, by Lemma~\ref{rho_sing}, optimal control is of bang--bang type. Suppose further that optimal control has two switchings, i.e., the optimal path in the $xy$-plane is a concatenation of three circular arcs.  For simplicity, let $a=1$.  Without loss of generality, suppose that the optimal bang--bang control takes on the values, $1$, $-1$ and $1$, sequentially.  Recall from Remark~\ref{rem:abnormal} that the second bang-arc is a semi-circle and so is of length $\pi$.

Then a general configuration of the arcs will be like the one shown in Figure~\ref{abnormal_proof}, where the initial and terminal points $z_0$ and $z_f$ and the directions at these points are as indicated.  It should be noted that the given configuration is general enough, as the only other configuration which is different from the one shown is the mirror image of the diagram about the line joining the centres of the circles, corresponding to the bang-bang control sequence of $-1$, $1$ and $-1$, instead.  In Figure~\ref{abnormal_proof}, the angle $\delta$ is chosen small enough so that the (blue) dashed curve, which goes through the switching points at $(1,0)$ and $(3,0)$, is a part of the three concatenated circular arcs from $z_0$ to $z_f$.  Note that $z_0$ and $z_f$ can be placed only in the lower halves of the respective circles that they belong to, because, again by Remark~\ref{rem:abnormal}, the length of any bang-arc can be at most $\pi$.  We will show that the trajectory shown by the (blue) dashed curve is not the shortest path, in that the ``perturbed'' trajectory, which follows the (red) solid curve depicted in Figure~\ref{abnormal_proof}, is shorter.

\begin{figure}
\begin{center}
\psfrag{1}{$1$}
\psfrag{z1}{$(0,0)$}
\psfrag{z2}{$(\cos\delta, -\sin\delta)$}
\psfrag{z3}{$(4-\cos\delta, -\sin\delta)$}
\psfrag{z4}{$(4,0)$}
\psfrag{z0}{$z_0$}
\psfrag{zf}{$z_f$}
\psfrag{d}{$\delta$}
\psfrag{a}{$\beta$}
\psfrag{c}{$c$}
\psfrag{sin}{$2\,\sin\beta$}
\[\includegraphics[width=130mm]{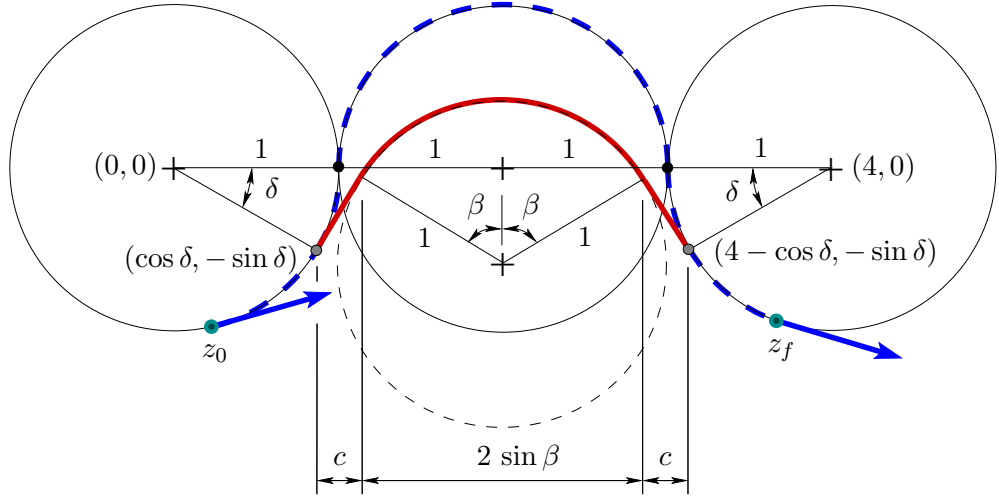}\]
\end{center}
\caption{\small\sf Diagram for the proof of Lemma~\ref{abnormal}.}
\label{abnormal_proof}
\end{figure}

The length of the dashed curve from $z_0$ to $z_f$ is simply given by
\[
\zeta = \gamma + \pi\,,
\]
where $\gamma$ is the sum of the lengths of the circular arcs from $z_0$ to $(1,0)$ and from $(3,0)$ to $z_f$.

The solid curve in the perturbed trajectory is composed by concatenating two straight line segments and a circular arc of radius $1$, in a symmetric fashion, as shown.  It is clear from the geometry that the length of each of the two straight line segments is $c\,/\sin\delta$ and the length of the circular segment is $2\,\beta$,
making the length of the solid curve $2\,(\beta + c\,/\sin\delta)$. The length of the perturbed curve can then be written as
\[
\xi = \gamma - 2\,\delta + 2\,(\beta + c\,/\sin\delta)\,,
\]
where $\beta = \pi/2 - \delta$ and $c = (4 - 2\,\cos\delta - 2\,\sin\beta) / 2$, from the given geometry.  Substituting the expressions for $c$ and $\beta$, using $\sin\beta = \cos\delta$, and rearranging further give the length of the perturbed curve as a function of $\delta$ as
\[
\xi(\delta) = \gamma + \pi - 4\,\delta + \frac{4\,(1 - \cos\delta)}{\sin\delta}\,,\quad\mbox{for } \delta\in(0,\pi/2]\,.
\]
Let $\eta(\delta) := \zeta - \xi(\delta)$.  Substitutions and trigonometric manipulations yield
\[
\eta(\delta) = 4\left(\delta - \frac{\sin\delta}{1+\cos\delta}\right)\,,\quad\mbox{for } \delta\in[0,\pi/2]\,. 
\]
Note that as $\delta\to 0^+$, $\eta(\delta) = \eta(0) = 0$, and so $\xi(0) = \zeta$.  This is in line with the geometric observation in Figure~\ref{abnormal_proof} that as $\delta\to 0^+$ the solid curve approaches the dashed curve on the upper half of the middle circle. Note also that
\[
\eta'(\delta) = 4\left(1 -
  \frac{1}{1+\cos\delta}\right)\,,\quad\mbox{for }
  \delta\in[0,\pi/2]\,,
\]
with $\lim_{\delta\to 0^+}\eta'(\delta) = \eta'(0) = 2$, and that $\eta'(\delta) = -\xi'(\delta) > 0$ for all $\delta\in[0,\pi/2)$.  These imply, along with $\xi(0) = \zeta$, that $\xi(\delta) < \zeta$, for all $\delta\in(0,\pi/2)$, furnishing the fact that an abnormal optimal path cannot be of type $CCC$.  This leaves the types $C$ and
$CC$ as the only candidates.

An abnormal optimal path of type $C$ will necessarily have a length not greater than $\pi$, as otherwise, by Remark~\ref{rem:abnormal}, one has to switch to another circular arc. By the same argument, an abnormal optimal path of type $CC$ will necessarily have a length not greater than $2\pi$.
\end{proof}

\subsection{Normal nonsingular optimal solution}

In Lemma~\ref{CCCC} below, we state that any path of type CCCC is not optimal for Problem~(Pc).  It is clear by Lemma~\ref{abnormal} that, in the abnormal case, i.e., when $\lambda_0 = 0$, Lemma~\ref{CCCC} holds immediately. For the proof of Lemma~\ref{CCCC} when $\lambda_0 > 0$ (the normal case), the diagram in Figure~\ref{CCCC_proof}, where a general configuration for a path of type CCCC and its perturbation are shown, will be used. In the figure, we have set $a=1$ for simplicity. Recall from the phase plane diagram in Figure~\ref{phase} and Remark~\ref{rem:ellipses}(i) that if the bang--bang control has two switchings, then the second arc will have a length strictly greater than $\pi$, say $\pi+\gamma$, with $\gamma > 0$. A path of type CCCC has three switchings, so the second and third arcs must both have the length $\pi+\gamma$.

\begin{figure}
\begin{center}
\psfrag{1}{$1$}
\psfrag{pc0}{$s_{c_0}$}
\psfrag{pc1}{$s_{c_1}$}
\psfrag{pc0c}{$(0,0)$}
\psfrag{pc1c}{$(2,0)$}
\psfrag{pc2}{$s_{c_2}$}
\psfrag{pc3}{$s_{c_3}$}
\psfrag{rc1}{$r_{c_1}$}
\psfrag{rc2}{$r_{c_2}$}
\psfrag{p0}{$s_0$}
\psfrag{p1}{$s_1$}
\psfrag{p2}{$s_2$}
\psfrag{r0}{$r_0$}
\psfrag{r1}{$r_1$}
\psfrag{r2}{$r_2$}
\psfrag{z0}{$z_0$}
\psfrag{zf}{$z_f$}
\psfrag{d}{$\delta$}
\psfrag{g}{$\gamma$}
\psfrag{b}{$\beta$}
\[\includegraphics[width=150mm]{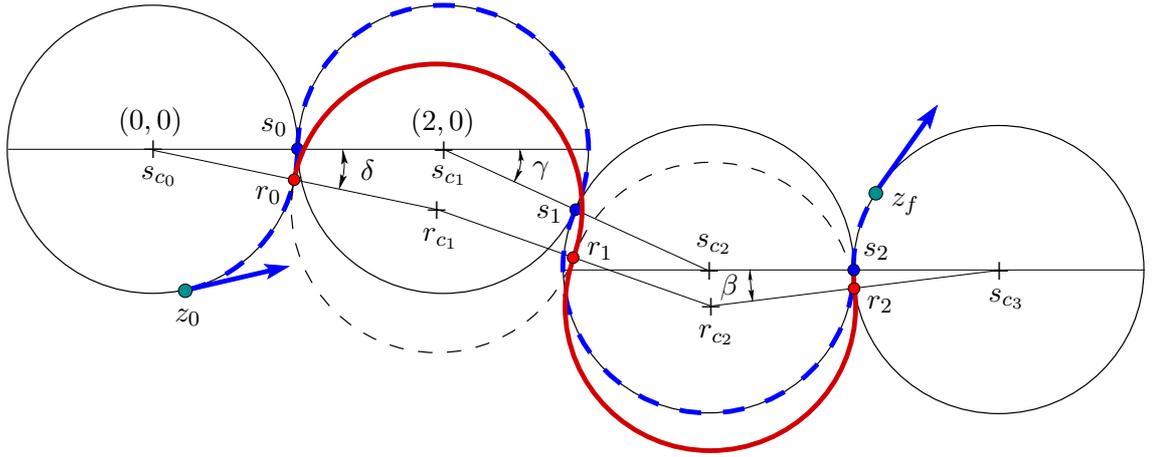}\]
\end{center}
\caption{\small\sf Diagram for the proof of Lemma~\ref{CCCC}.}
\label{CCCC_proof}
\end{figure}

Consider the path of type CCCC from $z_0$ to $z_f$, along the (blue) dashed curve, so the path satisfies the necessary optimality condition given graphically in Figure~\ref{phase}.  Let $\tau_0$ be the length of the circular arc from $z_0$ to $s_0$, $\tau_1$ the length from $s_0$ to $s_1$, $\tau_2$ the length from $s_1$ to $s_2$ and $\tau_3$ the length from $s_2$ to $z_f$.  Then the length $\tau$ of the path from $z_0$ to $z_f$ (shown by the dashed curve) with switchings from one circular subarc to another at $s_0$, $s_1$ and $s_2$ is simply given by $\tau = \tau_0 + \tau_2 + \tau_3 + \tau_3$, or
\begin{equation}  \label{tau}
\tau = \tau_0 + 2\,(\pi + \gamma) + \tau_ 3\,.
\end{equation}

It can be clearly seen in Figure~\ref{CCCC_proof} that most labelled points can be obtained from, or can be expressed in terms of, another labelled point by using rotation.  For clarity and convenience in manipulations, we will express points in the complex plane and employ {\em complex rotation}.  For example, $r_0$ can be obtained by rotating $s_0$ by angle $\delta$ in the clockwise direction about the centre $s_{c_0} = (0,0)$ of the leftmost unit circle; in other words, $r_0 = e^{-i\delta} = \cos\delta - i\,\sin\delta$, which is the {\em complex representation} of the planar point $r_0 = (\cos\delta, -\sin\delta)$.  Similarly, $s_1$ is obtained by rotating the point $(3,0)$ by angle $\gamma$ about the centre $s_{c_1}$ of a unit circle in the clockwise direction, i.e., that $s_1 = 2 + e^{-i\gamma}$ or equivalently $s_1 = (2 + \cos\gamma, -\sin\gamma)$.  By using the geometry, each labelled point in Figure~\ref{CCCC_proof} can then be expressed as follows.
\begin{equation}  \label{points1}
s_{c_0} = (0, 0)\,,\qquad
s_{c_1} = (2, 0)\,,\qquad
s_{c_2} = 2\,(1 + e^{-i\gamma})\,,\qquad 
s_{c_3} = 2\,(2 + e^{-i\gamma})\,,
\end{equation}
\begin{equation}  \label{points2}
s_1 = 2 + e^{-i\gamma}\,,\qquad
s_2 = 3 + 2\,e^{-i\gamma}\,,
\end{equation}
\begin{equation}  \label{points3}
r_0 = e^{-i\delta}\,,\qquad
r_{c_1} = 2\,e^{-i\delta}\,,\qquad
r_{c_2} = 2\,(2 + e^{-i\gamma} - e^{i\beta})\,,\qquad
r_2 = 4 + 2\,e^{-i\gamma} - e^{i\beta}\,.
\end{equation}
Note that $r_1 = (r_{c_1} + r_{c_2}) / 2$. Then
\begin{equation}  \label{points4}
r_1 = 2 + e^{-i\gamma} + e^{-i\delta} - e^{i\beta}\,.
\end{equation}

\begin{lemma}[Non-optimality of $CCCC$ Type] \label{CCCC} 
Any path of type CCCC is not optimal for Problem~{\em (Pc)}.
\end{lemma}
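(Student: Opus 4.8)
The proof splits along the dichotomy already set up before the statement. When the path is abnormal ($\lambda_0 = 0$), Lemma~\ref{abnormal} says that every abnormal optimal path is of type $CC$ or $C$, so it cannot be of type $CCCC$ and there is nothing to prove. Hence the plan is to treat the normal case $\lambda_0 > 0$ by contradiction: assume a path of type $CCCC$ from $z_0$ to $z_f$ satisfies the maximum principle, and produce a strictly shorter feasible path with the same endpoints and endpoint tangents, contradicting optimality.

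First I would use the structural input from Remark~\ref{rem:ellipses}(i): because the bang--bang control underlying a $CCCC$ path switches three times, the two middle arcs each subtend more than a semicircle and, by the phase portrait, share the common length $\pi + \gamma$ with $\gamma > 0$ (normalizing $a = 1$). I would then build the one-parameter competitor depicted in Figure~\ref{CCCC_proof}: keep the first and last unit circles (centres $s_{c_0}$ and $s_{c_3}$), rotate the first switching point clockwise to $r_0 = e^{-i\delta}$, and reconnect to $s_{c_3}$ through two new tangent unit circles with centres $r_{c_1} = 2e^{-i\delta}$, $r_{c_2}$ and switching points $r_1 = (r_{c_1}+r_{c_2})/2$, $r_2$, as recorded in \eqref{points1}--\eqref{points4}. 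By construction all the tangencies but one are automatic (the $C^1$ joins at $r_0, r_2$ and the prescribed tangent at $z_f$), and the single remaining closure condition is the mutual tangency $|r_{c_1} - r_{c_2}| = 2$, which using \eqref{points3} reads $|e^{-i\delta} - 2 - e^{-i\gamma} + e^{i\beta}| = 1$ and defines $\beta = \beta(\delta)$ implicitly with $\beta(0) = 0$, so that the competitor coincides with the given $CCCC$ path at $\delta = 0$.

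Next I would compute the competitor length $\xi(\delta)$ as the sum of four arc lengths, each written as an angular difference (an argument of the appropriate complex difference from \eqref{points1}--\eqref{points4}), and form $\eta(\delta) := \tau - \xi(\delta)$ with $\tau$ as in \eqref{tau}. Differentiating the closure relation gives $\beta'(0) = 1$, and after eliminating $\beta$ I expect $\eta$ to collapse to an explicit elementary function of $\delta$ and $\gamma$ with $\eta(0) = 0$. The crucial and somewhat delicate feature is that the \emph{first-order} term vanishes, $\eta'(0) = 0$: to first order the shortening of arcs $1$ and $2$ (each $-\delta$) is exactly cancelled by the lengthening of arcs $3$ and $4$ (each $+\delta$). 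The decision therefore rests on the second-order term, and the heart of the proof is the computation yielding $\eta''(0) = 4\,(1 + \cos\gamma)/\sin\gamma > 0$ for $\gamma \in (0,\pi)$. With $\eta(0) = \eta'(0) = 0$ and $\eta''(0) > 0$ one concludes $\eta(\delta) > 0$, i.e.\ $\xi(\delta) < \tau$, for all sufficiently small $\delta > 0$; since the competitor is an admissible Markov--Dubins curve (unit curvature on each arc, $C^1$ at the switchings, correct endpoints and tangents) that is strictly shorter, the $CCCC$ path is not optimal.

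The main obstacle is exactly this vanishing of the first-order variation: unlike the perturbation used in Lemma~\ref{abnormal}, here the unperturbed $CCCC$ configuration is a \emph{critical} point of the length along the admissible family, so a naive first-order comparison is inconclusive and one must carry the expansion of the implicitly-defined $\beta(\delta)$ through second order and pin down the sign of $\eta''(0)$. A secondary point, routine once $\beta(\cdot)$ is continuous with $\beta(0) = 0$, is to check that for small $\delta$ all four competitor arcs retain positive length, so that $\xi(\delta)$ really is the length of a genuine four-arc admissible path.
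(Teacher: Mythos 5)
Your proposal follows essentially the same route as the paper: the same one-parameter perturbation of the $CCCC$ configuration from Figure~\ref{CCCC_proof} with the implicit closure condition $|r_{c_1}-r_{c_2}|=2$ defining $\beta(\delta)$, the same observation that the first-order variation cancels ($\beta'(0)=1$, $\xi_1'(0)=-1$, $\xi_2'(0)=1$), and the same reliance on the sign of the second-order variation. The only discrepancy is your predicted value of the second derivative: the paper's computation gives $-2(1+2\cos\gamma)/\sin\gamma$ for the perturbed length (i.e.\ $+2(1+2\cos\gamma)/\sin\gamma$ in your sign convention $\eta=\tau-\xi$), not $4(1+\cos\gamma)/\sin\gamma$, though both have the sign you need on the range $\gamma\in(0,\pi/2]$ used in the paper, so the structure of your argument is unaffected.
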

\begin{proof}
As discussed before, a general configuration for a path of type CCCC is depicted in Figure~\ref{CCCC_proof}.  Recall that the coordinates of the labelled points in Figure~\ref{CCCC_proof} have been given (in the complex plane) in \eqref{points1}--\eqref{points4}.

We will show that the path of type CCCC between the oriented points $z_0$ and $z_f$ in Figure~\ref{CCCC_proof} is not optimal.  Note that the point $z_0$ can be taken to be anywhere on the lower semi-circle where it currently sits.  The perturbation $\delta>0$ is arbitrarily small so that $r_0$ is {\em between} $z_0$ and $s_0$.  Similarly, the point $z_f$ can be taken to be anywhere on the upper semi-circle where it currently resides.

The length $\eta$ of the perturbed path can now be written as a function of $\delta$ as follows.
\begin{equation}  \label{eta}
\eta(\delta) = \tau_0 - \delta + \xi_1(\delta)  + \xi_2(\delta) + \beta(\delta)  + \tau_3\,, 
\end{equation}
where $\xi_1$ is the length of the circular path from $r_0$ to $r_1$, $\xi_2$ the length of the circular path from $r_1$ to $r_2$, and $\beta$ the length of the circular path from $r_2$ to $s_2$, each of which are functions of $\delta$.  Note that
\begin{equation}  \label{initial}
\xi_1(0) = \xi_2(0) = \pi + \gamma\,,\quad \beta(0) = 0\,,
\end{equation}
and so, using \eqref{tau},
\[
\eta(0) = \tau\,.
\]
Then it suffices to show that $\eta'(0) < 0$.  If, however, $\eta'(0) = 0$, as it will turn out to be the case, it will
consequently suffice to show that $\eta''(0) < 0$.  From \eqref{eta}, one has that
\begin{equation}  \label{etad0}
\eta'(0) = -1 + \xi_1'(0)  + \xi_2'(0)  + \beta'(0) \,.
\end{equation}
In what follows, we will not always show dependence of $\xi_1$, $\xi_2$ and $\beta$ on $\delta$, for clarity.  It is straightforward to write, from Figure~\ref{CCCC_proof}, that
\begin{eqnarray}
|r_{c_1} - r_{c_2}|^2 &=& 4\,, \label{eqn1a} \\[1mm]
 e^{-i\xi_1} (r_0 - r_{c_1}) &=& r_1 - r_{c_1}\,, \label{eqn2a} \\[1mm]
e^{i\xi_2} (r_1 - r_{c_2}) &=& r_2 - r_{c_2}\,. \label{eqn3a}
\end{eqnarray}
After substitutions and simplifying manipulations, \eqref{eqn1a}--\eqref{eqn3a} yield the following equations,
respectively.
\begin{eqnarray}
&& 2\,(\cos\delta + \cos\beta - \cos\gamma) - \cos(\delta+\beta) + \cos(\delta-\gamma) - \cos(\beta+\gamma) = 3\,, \label{eqn1b} \\[1mm] 
&& e^{-i\xi_1} = 1 - 2\,e^{i\delta} - e^{i(\delta - \gamma)} + e^{i(\delta+\beta)}\,, \label{eqn2b} \\[1mm]
&& e^{-i\xi_2} = 1 - 2\,e^{-i\beta} - e^{-i(\beta + \gamma)} + e^{-i(\delta+\beta)}\,. \label{eqn3b}
\end{eqnarray}
When $\delta = 0$ (the unperturbed case), Equation~\eqref{eqn1b} is verified by $\cos(\beta) = 1$, i.e., $\beta = 0$, Equation~\eqref{eqn2b}~reduces to $e^{-i\xi_1(0)} = -e^{-i\gamma} = e^{-i(\pi+\gamma)}$, i.e., $\xi_1(0) = \pi+\gamma$, and Equation~\eqref{eqn3b} to $e^{-i\xi_2(0)} = -e^{-i\gamma} = e^{-i(\pi+\gamma)}$, i.e., $\xi_2(0) = \pi+\gamma$, which altogether reconfirm \eqref{initial}.

In order to evaluate $\eta'(0)$ in \eqref{etad0}, we need $\xi_1'(0)$, $\xi_2'(0)$ and $\beta'(0)$, which can be obtained by differentiating Equations~\eqref{eqn1b}--\eqref{eqn3b} and substituting the known quantities for the unperturbed case.  Differentiating \eqref{eqn1b} with respect to $\delta$, one gets
\begin{equation}  \label{eqn1deriv}
2\,(\sin\delta + \beta'\,\sin\beta) - (1 +
\beta')\,\sin(\delta+\beta) + \sin(\delta-\gamma) +
\beta'\,\sin(\beta+\gamma) = 0\,.
\end{equation}
Then substitution of $\delta = \beta = 0$ (the unperturbed case) into \eqref{eqn1deriv} and the fact that $\sin\gamma\neq0$ yield
\begin{equation}  \label{betad0}
\beta'(0) = 1\,.
\end{equation}
Differentiation of Equation~\eqref{eqn2b} with respect to $\delta$ gives
\begin{equation}  \label{eqn2deriv}
\xi_1'\,e^{-i\xi_1} = 2\,e^{i\delta} + e^{i(\delta - \gamma)} - (1 +
\beta')\,e^{i(\delta+\beta)}\,.
\end{equation}
Substitution of $\delta = \beta = 0$, $\beta'(0) = 1$ and $\xi_1(0) = \pi+\gamma$, for the unperturbed case, into Equation~\eqref{eqn2deriv} and manipulations result in
\begin{equation}  \label{xi1d0}
\xi_1'(0) = -1\,.
\end{equation}
Similarly, differentiation of Equation~\eqref{eqn3b} with respect to $\delta$ results in
\begin{equation}  \label{eqn3deriv}
\xi_2'\,e^{-i\xi_2} = -2\,\beta'\,e^{-i\beta} -
\beta'\,e^{-i(\beta+\gamma)} + (1 + \beta')\,e^{i(\delta+\beta)}\,,
\end{equation}
and the substitution of $\delta = \beta = 0$, $\beta'(0) = 1$ and $\xi_2(0) = \pi+\gamma$, and manipulations give
\begin{equation}  \label{xi2d0}
\xi_2'(0) = 1\,.
\end{equation}
Now, substituting \eqref{betad0}, \eqref{xi1d0} and \eqref{xi2d0} into \eqref{etad0}, one gets
\[
\eta'(0) = 0\,.
\]
As pointed out immediately after~\eqref{initial} above, this necessitates to check if
\begin{equation}  \label{etadd}
\eta''(0) = \xi_1''(0)  + \xi_2''(0)  + \beta''(0) < 0\,
\end{equation}
to furnish the proof. Now, differentiate the equation in
\eqref{eqn1deriv} to get
\begin{eqnarray}
&& 2\,(\cos\delta + \beta''\,\sin\beta + (\beta')^2\cos\beta) - \beta''\,\sin(\delta+\beta) - (1+\beta')^2\cos(\delta+\beta) \nonumber \\[1mm]
&& \hspace*{45mm} + \cos(\delta-\gamma) + \beta''\,\sin(\beta+\gamma) + (\beta')^2\cos(\beta+\gamma) = 0\,.  \label{eqn1deriv2}
\end{eqnarray}
Substituting into \eqref{eqn1deriv2} the (unperturbed) quantities $\delta = \beta = 0$ and $\beta'(0) = 1$, and manipulating further, one gets
\begin{equation}  \label{betadd}
\beta''(0) = -2\,\cot\gamma\,,
\end{equation}
Recall that $\gamma>0$, so \eqref{betadd} is well-defined.  Next, differentiate \eqref{eqn2deriv} to get
\begin{equation}  \label{eqn2deriv2}
\left(\xi_1'' - i\,(\xi_1')^2\right)\,e^{-i\xi_1} = 2i\,e^{i\delta} +
i\,e^{i(\delta - \gamma)} - \left(\beta'' + i\,(1 +\beta')^2\right)\,e^{i(\delta+\beta)}\,.
\end{equation}
Substituting into \eqref{eqn2deriv2} $\delta = \beta = 0$, $\beta'(0) = 1$, $\beta''(0) = -2\,\cot\gamma$, $\xi_1(0) = \pi+\gamma$ and $\xi_1'(0) = -1$, and manipulating further, one gets
\[
\xi_1''\, e^{-i\gamma} = -2\,\cot\gamma + 2i\,,
\]
which yields
\begin{equation}  \label{xi1dd}
\xi_1''(0) = -2\,\csc\gamma\,.
\end{equation}
Similarly, differentiation of \eqref{eqn3deriv} gives
\begin{eqnarray}
\left(\xi_2'' - i\,(\xi_2')^2\right)\,e^{-i\xi_2} &=& -\left(\beta'' - i\,(\beta')^2\right)\left(2\,e^{-i\beta} +
    e^{-i(\beta+\gamma)}\right)  \nonumber \\[1mm]
&& +\ \left(\beta'' + i\,(1+\beta')^2\right)\,e^{-i(\delta+\beta)}\,.  \label{eqn3deriv2}
\end{eqnarray}
Substitution of $\beta'(0) = 1$, $\beta''(0) = -2\,\cot\gamma$, $\xi_2(0) = \pi+\gamma$ and $\xi_2'(0) = 1$ into \eqref{eqn3deriv2}, and some lengthy manipulations, result in
\begin{equation}  \label{xi2dd}
\xi_2''(0) = -2\,\cot\gamma\,.
\end{equation}
Substitution of \eqref{betadd}, \eqref{xi1dd} and \eqref{xi2dd} into \eqref{etadd} yields
\[
\eta''(0) = -2\left(\frac{1+2\,\cos\gamma}{\sin\gamma}\right) < 0\,,
\]
for any $\gamma\in(0,\pi/2]$.  Therefore, given the fact that $\eta(0) =\tau$ and $ \eta'(0) = 0$, we conclude that $\eta$ is decreasing at $\delta=0$ and, by continuity of $\eta$, the perturbed length $\eta(\delta) < \tau$ for small enough $\delta > 0$.  This completes the proof.
\end{proof}

\subsection{Dubins' result}

The preceding results (Lemmas~\ref{rho_nonsing}--\ref{CCCC} and the phase plane diagram in Figure~\ref{phase}) obtained by using Pontryagin maximum principle and perturbation of trajectories can now be used to prove  Dubins' result in~\cite{Dubins1957}.

\begin{theorem}[Markov-Dubins Curves -- Dubins~\cite{Dubins1957}]  \label{Dubins}
Any solution of Problem~{\em (P)}, that is, any $C^1$ and piecewise-$C^2$ shortest path of bounded curvature in the plane between two prescribed endpoints, where the slopes of the path are also prescribed, is of type $CSC$, or of type $CCC$, or a subset thereof.  Moreover, if the shortest path is of type $CCC$, then the second circular arc is of length greater than $\pi/a$.
\end{theorem}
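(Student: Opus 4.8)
The plan is to assemble the preceding lemmas into a case analysis, using the equivalence (established in Section~2) between Problem~(P) and the control problem~(Pc). The maximum principle forces the optimal control on each subinterval to be either singular or nonsingular, and we already know that a singular arc corresponds to a straight segment $S$ (Lemma~\ref{singular}) while a nonsingular (bang--bang) arc corresponds to a circular arc $C$. Hence every optimal path is a finite concatenation of $C$'s and $S$'s, and the natural first step is to split on whether the path contains a straight segment.

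If the optimal path contains an $S$, then Lemma~\ref{straight} immediately confines it to $CSC$, $CS$, $SC$, or $S$, each of which is a subset of $CSC$, and nothing further is needed. If it contains no $S$, the path is a pure concatenation $C^n$ of circular arcs, and I would split on normality. In the abnormal case ($\lambda_0=0$), Lemma~\ref{abnormal} gives type $CC$ or $C$, a subset of $CCC$. In the normal case ($\lambda_0>0$) the remaining task is to rule out $n\ge 4$. The reduction I would use is that a path with $n\ge 4$ arcs has, among its first four arcs, two \emph{interior} arcs lying between consecutive switchings; by Remark~\ref{rem:ellipses}(i) together with the symmetry of the two concentric ellipses in Figure~\ref{phase}, these interior arcs share the common length $\pi+\gamma$ with $\gamma>0$, which is exactly the configuration treated in Lemma~\ref{CCCC}. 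Applying Lemma~\ref{CCCC} to this four-arc window yields a strictly shorter path between the window's oriented endpoints; splicing it back in while keeping all remaining arcs fixed shortens the whole path, contradicting optimality. Therefore $n\le 3$, giving $CCC$, $CC$, or $C$, a subset of $CCC$.

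The hard part will be justifying this last reduction cleanly, namely that Lemma~\ref{CCCC}, whose figure depicts one specific four-arc configuration, applies verbatim to the first four arcs of a longer normal extremal. I would dispatch this by observing that the only structural input the proof of Lemma~\ref{CCCC} actually uses is that the two middle arcs have the common length $\pi+\gamma$ (automatic for any two consecutive interior arcs of a normal bang--bang extremal), whereas the outer arc lengths $\tau_0$ and $\tau_3$ enter the length functional only additively and drop out of the decisive quantity $\eta''(0)$; hence arbitrary positive outer lengths are admissible. Finally, for the \emph{Moreover} clause I would note that a path of type $CCC$ has exactly two switchings and cannot be abnormal, since Lemma~\ref{abnormal} permits only $CC$ or $C$ in the abnormal case; being normal, its second (interior) arc has length strictly greater than $\pi/a$ by Remark~\ref{rem:ellipses}(i), which completes the statement.
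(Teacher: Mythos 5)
Your proof is correct and follows essentially the same route as the paper's: the same case split on the presence of a straight segment and on normality, discharged by Lemma~\ref{straight}, Lemma~\ref{abnormal} and Lemma~\ref{CCCC}, with the phase portrait of Figure~\ref{phase} supplying the final claim about the middle arc of a $CCC$ path. The only difference is that you make explicit the reduction from a general $C^n$, $n\ge 4$, extremal to the four-arc configuration of Lemma~\ref{CCCC}, a step the paper leaves implicit.
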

\begin{proof}
If the solution is abnormal, i.e., $\lambda_0 = 0$, then by Lemma~\ref{abnormal} the shortest path is of type $C$ or $CC$, which in either case is a subset of $CCC$.  Suppose that the solution is normal, i.e., $\lambda_0 \neq 0$.  Then the shortest path is either of type
\begin{enumerate}
\item[(i)] $CSC$, $CS$, $SC$ or $S$, if it contains a straight line segment, by Lemma~\ref{straight}, or
\item[(ii)] $CCC$, $CC$ or $C$, by Lemmas~\ref{rho_nonsing} and~\ref{CCCC}.
\end{enumerate}
The last statement of the theorem is proved by using the phase portrait in Figure~\ref{phase}, with $\rho>\lambda_0>0$.  If the shortest path is of type $CCC$, then three pieces of ellipses in Figure~\ref{phase} are concatenated, with the second ellipse sweeping an angle greater than $\pi$, completing the proof.
\end{proof}

\section{Stationarity of feasible curves of type $CSC$ or $CCC$}

Dubins' result in Theorem~\ref{Dubins} states that if a curve is optimal for Problem~(P), or equivalently for Problem~(Pc), then that curve is of type $CSC$ or $CCC$, or a subset thereof.  It is clear that a solution curve of Problem~(Pc) verifies the Pontryagin maximum principle, which furnish necessary conditions of optimality.  However, neither \cite{Dubins1957} nor \cite{BoiCerLeb1991,SusTan1991} provide any further explanation as to whether or not any other feasible curve for Problem~(Pc), i.e., any other curve that satisfies the constraints of Problem~(Pc), which are of the types listed above,  would also verify the Pontryagin maximum principle.  These feasible curves presumably have a length larger than that of a solution curve of Problem~(Pc).  In Theorem~\ref{stationarity} below, we state that these kinds of feasible solutions indeed verify the necessary conditions of optimality, i.e., they are {\em stationary}, or {\em critical}, solutions of Problem~(P).

In the proof of Theorem~\ref{stationarity}, we assume that a feasible solution has been provided, in that the times $t_1$ and $t_2$ at which switchings from one subarc to another occur (note that in general $0\le t_1\le t_2\le t_f$), as well as the terminal time $t_f$, which is the length of the curve, are known.  The signed curvatures ($u(t) = a$ or $-a$) of the $C$ subarcs of the feasible curve are also given, and so $\theta_1 := \theta(t_1)$ and $\theta_2 := \theta(t_2)$ are also known/easily calculable.  Recall from the original problem description that $\theta(0) = \theta_0$.  \\[-8mm]
\begin{theorem}[Stationarity of Feasible Curves]  \label{stationarity}
Any feasible path for Problem~{\em (Pc)}, i.e., any path satisfying the constraints of Problem~{\em (Pc)}, which is of type $CSC$ or $CCC$, or a subset thereof, verifies the Pontryagin maximum principle. \\[-5mm]
\end{theorem}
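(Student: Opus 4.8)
The statement is an existence claim about multipliers, so my plan is constructive: given the feasible curve together with its switching times $t_1,t_2$, terminal time $t_f$, the known signed curvatures of its $C$-subarcs and the resulting angles $\theta_0,\theta_1,\theta_2,\theta_f$, I would exhibit a number $\lambda_0\ge0$, constants $\overline{\lambda}_1,\overline{\lambda}_2$ (equivalently $\rho$ and $\phi$ through \eqref{rhophi}) and a continuous switching function $\lambda_3$ satisfying every condition of the maximum principle. The reductions already in the text do most of the bookkeeping: \eqref{adjoint1}--\eqref{adjoint2} make $\overline{\lambda}_1,\overline{\lambda}_2$ constant, so only the three scalars $\rho,\phi,\lambda_0$ and the function $\lambda_3$ remain to be chosen, and on any nonsingular arc Lemma~\ref{nonsingular} together with the control law \eqref{control3} forces $\lambda_3=-\sgn(u)\,[\rho\cos(\theta-\phi)+\lambda_0]/a$, so once $\rho,\phi,\lambda_0$ are fixed the switching function is determined on every subarc. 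Because both endpoints are fully prescribed in Problem~(Pc), there is no transversality condition on $\lambda_3(0)$ or $\lambda_3(t_f)$, which is exactly the freedom the construction exploits. What then remains is to choose $\rho,\phi,\lambda_0$ so that the resulting $\lambda_3$ is continuous across the junctions, carries the sign dictated by \eqref{control3} on each subarc, and makes the multiplier vector nonvanishing.

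If the curve contains a straight segment $S$ (types $CSC$, $CS$, $SC$, $S$), the singular arc fixes everything. By Lemmas~\ref{rho_sing} and~\ref{singular}, along $S$ one has $\rho=\lambda_0>0$, $\lambda_3\equiv0$ and $\cos(\theta_S-\phi)=-1$, where $\theta_S$ is the constant angle on $S$. I would therefore set $\phi:=\theta_S-\pi$ and normalise $\rho=\lambda_0=1$, put $\lambda_3\equiv0$ on $S$, and define $\lambda_3$ on each adjacent $C$-subarc by the nonsingular formula above. Continuity at a $C$--$S$ junction is automatic, since there $\theta=\theta_S$ gives $\cos(\theta-\phi)=-1$ and hence $\lambda_3=0$; differentiating the formula and inserting $\dot\theta=u$ reproduces the adjoint equation \eqref{adjoint3a}; $H\equiv0$ is read directly off \eqref{H_zero2}; and $\lambda_3$ retains a single nonzero sign on each $C$-subarc as long as $\theta\neq\theta_S$ in its interior, i.e. provided no $C$-subarc closes a full loop.

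If the curve is purely circular (types $CCC$, $CC$, $C$) there is no singular arc to anchor $\phi$, so $\phi$ and the ratio $\lambda_0/\rho$ must instead be read off from the switches. A lone $C$-arc is trivial: take $\rho=0$, $\lambda_0=1$, giving the constant $\lambda_3=-\sgn(u)/a$. At each switch $t_i$ the sign change of $\lambda_3$ forces $\lambda_3(t_i)=0$, i.e. $\cos(\theta_i-\phi)=-\lambda_0/\rho$; writing $\theta^\ast:=\arccos(-\lambda_0/\rho)\in(\pi/2,\pi)$, the monotonicity of $\theta$ and the sign pattern of \eqref{control3} constrain $\theta^\ast$ and fix $\phi$. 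When there are two switches (type $CCC$) the middle arc pins $\theta^\ast=|\theta_1-\theta_2|/2$ exactly, with $\phi$ then obtained from $\theta_1-\phi=\pm\theta^\ast$; with a single switch (type $CC$) $\theta^\ast$ is only required to be large enough that neither arc leaves the admissible window. With $\rho,\phi,\lambda_0$ so determined, $\lambda_3$ is defined arc by arc by the same formula, the adjoint equation holds by the same differentiation, continuity at the switches holds by construction, and $\lambda_0>0$ gives nontriviality.

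The hard part will be the simultaneous consistency of continuity and the sign requirement \eqref{control3} across all junctions under a single global triple $(\rho,\phi,\lambda_0)$, and it is sharpest in the $CCC$ case. There the between-switch (middle) arc is forced to sweep exactly $2\theta^\ast$, so it must have length in $(\pi/a,2\pi/a)$ --- recovering the very condition stated in Theorem~\ref{Dubins} --- and it must be at least as long as either outer arc, since otherwise $\theta-\phi$ would leave the window $(-\theta^\ast,\theta^\ast)$ on an outer arc and $\lambda_3$ would pick up a spurious extra zero, i.e. an unwanted switch. I expect the bulk of the proof to consist of verifying exactly these inequalities (for the $S$-cases the analogous but much milder requirement is only that no $C$-subarc close a full loop), after which the remaining checks are routine.
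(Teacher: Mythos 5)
Your proposal is correct and follows essentially the same route as the paper: a case-by-case construction of $(\lambda_0,\rho,\phi)$ and hence of $\lambda_3$ via $|\lambda_3(t)|=[\rho\cos(\theta(t)-\phi)+\lambda_0]/a$, anchored by the singular arc (forcing $\rho=\lambda_0$ and $\phi=\theta_S-\pi$) when an $S$ is present and by the switching conditions $\lambda_3(t_i)=0$ otherwise, with free parameters in the under-determined cases $CC$ and $C$. The only differences are cosmetic or supplementary: the paper picks $\rho=1/2>0$ rather than your $\rho=0$ for a lone $C$, it additionally exhibits abnormal multipliers for types $C$ and $CC$ (needed for Corollaries~\ref{normal_abnormal_1} and~\ref{normal_abnormal_2}, not for the theorem itself), and it leaves largely implicit the sign-consistency inequalities (middle-arc length exceeding $\pi/a$ and dominating the outer arcs in the $CCC$ case) that you correctly identify as the remaining verification.
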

\begin{proof}
It suffices to show that, for any feasible curve of type $CSC$ or $CCC$, or a subset thereof, there exist adjoint variables $\lambda_0 \ge 0$, $\lambda_1$, $\lambda_2$ and $\lambda_3$ such that \eqref{H_zero2} holds.  Recall that the constants $\rho$ and $\phi$ in \eqref{H_zero2} are defined in \eqref{rhophi} in terms of the constant values of $\lambda_1$ and $\lambda_2$.  So the task boils down to finding pertaining $\rho$, $\phi$ and $\lambda_3$ for every single possible feasible curve of the given types.

First, suppose that Problem~(Pc) is normal, i.e., $\lambda_0 > 0$, and without loss of generality, set $\lambda_0 = 1$.  We examine feasible curves of the two basic types, $CSC$ or $CCC$, and their subsets thereof, for the normal case, one by one. \\[-8mm]
\begin{itemize}
\item[(a)]  Consider a feasible curve of type $CSC$, or of one of the types $CS$, $SC$, and $S$.  Recall that along the subarc $S$, $\lambda_3(t) = 0$, and so $\rho = \lambda_0 = 1$ by Lemma~\ref{rho_sing}.  Hence, Equation~\eqref{H_zero2} reduces to $\cos(\theta(t) - \phi) = -1$, with $\theta(t) = \theta_1$ constant, which implies that $\phi = \theta_1 \pm \pi$.  Set $\phi = \theta_1 - \pi$.  Then, for a feasible curve of type $CSC$, the adjoint variable, or the switching function, $\lambda_3(t)$ can be constructed {\em uniquely}, in terms of the feasible solution parameters $t_1$, $t_2$, $t_f$ and $\theta_1$, as
\begin{equation}  \label{lambda3_CSC}
\lambda_3(t) = \left\{\begin{array}{ll}
-\left[\cos(\theta(t) - \theta_1 + \pi) + 1\right]/u(t)\,, & \mbox{\ if\ \ } 0 < t < t_1\mbox{\ \ or\ \ } t_2 < t < t_f\,, \\[2mm]
0\,, & \mbox{\ if\ \ } t_1 < t < t_2\,,
\end{array}\right.
\end{equation}
We note that the first equation in \eqref{lambda3_CSC} can be re-written, by using $u(t) = -a\,\sgn(\lambda_3(t))$, as
\[
|\lambda_3(t)| = \left[\cos(\theta(t) - \theta_1 + \pi) + 1\right]/a\,,
\]
which agrees with \eqref{lambda3}.  The expression in \eqref{lambda3_CSC} holds for any feasible curve of type $CS$ (where $t_2 = t_f$), type $SC$ (where $t_1 = 0$) or type $S$ (where $t_1 = 0$ and $t_2 = t_f$), too.

\item[(b)]  Consider feasible curves of each of the types $CCC$, $CC$ and $C$, one by one, below.
\begin{itemize}
\item[(i)]  Type $CCC$:  This type requires two switchings, so $0 < t_1 < t_2 < t_f$ and that $\lambda_3(t_1) = 0$ and $\lambda_3(t_2) = 0$, with which Equation~\eqref{H_zero2} yields two equations in the two unknowns $\rho$ and $\phi$; namely, $\rho\,\cos(\theta_1 - \phi) + 1 = 0$ and $\rho\,\cos(\theta_2 - \phi) + 1 = 0$, where $\rho > 1$ by Remark~\ref{rem:ellipses}.  These two equations result in $\cos(\theta_1 - \phi) = \cos(\theta_2 - \phi)$.  By Figure~\ref{phase} and the second statement of Theorem~\ref{Dubins}, $\theta_2 - \phi = -(\theta_1 - \phi)$.  Then simple algebraic manipulations provide a unique solution for the constants $\rho$ and $\phi$ as:
\[
\phi = (\theta_1 + \theta_2)/2\,\quad\mbox{and}\quad
\rho = -\sec\left((\theta_1 - \theta_2)/2\right)\,.
\]
Here, one can easily verify that $\rho>1$, indeed, as follows: since $|\theta_1 - \theta_2| > \pi$ by the second statement of Theorem~\ref{Dubins}, $-1 < \cos\left((\theta_1 - \theta_2)/2\right) < 0$ and so $-\sec\left((\theta_1 - \theta_2)/2\right) > 1$.  Finally, a {\em unique} solution for $\lambda_3(t)$, in terms of the feasible solution parameters $t_1$, $t_2$, $t_f$, $\theta_1$ and $\theta_2$, can be written down for this case as
\begin{equation}  \label{lambda3_CCC}
\lambda_3(t) = -\sec\left((\theta_1 - \theta_2)/2\right)\left[\cos(\theta(t) - (\theta_1 + \theta_2)/2) + 1\right]/u(t)\,,\ \mbox{a.e. } t\in[0, t_f]\,.
\end{equation}
With $u(t) = -a\,\sgn(\lambda_3(t))$, Equation~\eqref{lambda3_CCC} becomes
\[
|\lambda_3(t)| = \sec\left((\theta_1 - \theta_2)/2\right)\left[\cos(\theta(t) - (\theta_1 + \theta_2)/2) + 1\right]/a\,,
\]
which agrees with \eqref{lambda3}.
\item[(ii)]  Type $CC$: This type requires only one switching, so, without loss of generality, let $0 < t_1 < t_2 = t_f$.  Then $\lambda_3(t_1) = 0$ and Equation~\eqref{H_zero2} result in $\rho\,\cos(\theta_1 - \phi) + 1 = 0$, where $\rho > 1$ by Remark~\ref{rem:ellipses}.  This single equation in two unknowns results in infinitely many solutions for $\rho > 1$ and $\phi$.  For simplicity, take $\rho =2$ and so $\phi = \theta_1 \pm 2\pi/3$, or simply take $\phi = \theta_1 - 2\pi/3$.  Then $\lambda_3(t)$ is given as
\begin{equation}  \label{lambda3_CC}
\lambda_3(t) = -2\left[\cos(\theta(t) - \theta_1 + 2\pi/3) + 1\right]/u(t)\,,\quad\mbox{a.e. } t\in[0, t_f]\,.
\end{equation}
For verification purposes, it can be easily seen that
\[
|\lambda_3(t)| = 2\left[\cos(\theta(t) - \theta_1 + 2\pi/3) + 1\right]/a\,,
\]
which agrees with \eqref{lambda3}.
\item[(iii)]  Type $C$:  This type requires no switchings, so by Remark~\ref{rem:ellipses}, any $0 < \rho < 1$ and any $-\pi\le\phi\le\pi$ would do.  Take $\rho = 1/2$ and $\phi = 0$, for the sake of simplicity, which yield
\begin{equation}  \label{lambda3_C}
\lambda_3(t) = -\left[\cos\theta(t) + 1\right]/(2\,u(t))\,,\quad\mbox{a.e. } t\in[0, t_f]\,.
\end{equation}
From \eqref{lambda3_C}, one gets $|\lambda_3(t)| = \left[\cos\theta(t) + 1\right]/(2a)$, which agrees with \eqref{lambda3}. \\[-8mm]
\end{itemize}
\end{itemize}
Next, suppose that Problem~(Pc) is abnormal, i.e., $\lambda_0 = 0$.  Then, by Lemma~\ref{abnormal}, the only possible types are $C$ and $CC$, which we consider below. \\[-8mm]
\begin{itemize}
\item[(a)]  Type $CC$:  In this case, by \eqref{abnormal_eqn} and Figure~\ref{abnormal_phase}, $|\theta_0 - \theta_1| \le \pi$ and $|\theta_1 - \theta_f| \le \pi$.  This type has only one switching, so without loss of generality, let $0\neq t_1\neq t_2 = t_f$.  Then $\lambda_3(t_1) = 0$ and Equation~\eqref{H_zero2} give $\rho\,\cos(\theta_1 - \phi) = 0$, where $\rho > 0$ by Remark~\ref{rem:abnormal}, and so $\cos(\theta_1 - \phi) = 0$, which yields $\phi = \theta_1 \pm \pi/2$.  For simplicity, take $\rho = 1$ and $\phi = \theta_1 - \pi/2$. Now, one can write
\begin{equation}  \label{lambda3_CC_abnormal}
\lambda_3(t) = -\cos(\theta(t) - \theta_1 + \pi/2)/u(t)\,,\quad\mbox{a.e. } t\in[0, t_f]\,.
\end{equation}
From \eqref{lambda3_CC_abnormal}, one gets $|\lambda_3(t)| = \cos(\theta(t) - \theta_1 + \pi/2)/a$, which agrees with \eqref{lambda3}.
\item[(b)]  Type $C$:  In this case, by \eqref{abnormal_eqn} and the phase plane diagram in Figure~\ref{abnormal_phase}, $|\theta_0 - \theta_1| \le \pi$.  This type has no switchings, so $\rho > 0$ can be chosen arbitrarily and $\phi$ can be determined based on the value of $\rho$.  For simplicity take $\rho = 1$.  Then
\[
\lambda_3(t) = -\cos(\theta(t) - \phi)/u(t)\,,\quad\mbox{a.e. } t\in[0, t_f]\,,\ \mbox{ or}
\]
\[
|\lambda_3(t)| = \cos(\theta(t) - \phi)/a\,,\quad\mbox{a.e. } t\in[0, t_f]\,,
\]
which agrees with \eqref{lambda3} if $\theta_0 - \phi = - \pi/2$, whenever $u(t) \equiv a$, or if $\theta_0 - \phi = \pi/2$, whenever $u(t) \equiv -a$.  Therefore a valid construction of $\lambda_3(t)$ is as follows. For a.e. $t\in[0, t_f]$,
\begin{equation}  \label{lambda3_C_abnormal}
\lambda_3(t) = \left\{\begin{array}{ll}
-\cos(\theta(t) - \theta_0 - \pi/2)/a\,, & \mbox{ if } u(t) = a\,, \\[1mm]
\cos(\theta(t) - \theta_0 + \pi/2)/a\,, & \mbox{ if } u(t) = -a\,.
\end{array} \right.
\end{equation}
\end{itemize}
This completes the proof.
\end{proof}

\begin{remark}  \label{rem:normal_abnormal} \rm
The proof of Theorem~\ref{stationarity} provides closed form expressions for the adjoint variable $\lambda_3$, which is also the switching function, once a  feasible curve of certain types, of the Markov-Dubins problem, is given.  Expression~\eqref{lambda3_CSC} is provided for feasible curves of types $CSC$, $CS$, $SC$ and $S$.  In the case when Problem~(Pc) is normal, Expression \eqref{lambda3_CCC} is derived for type $CCC$, \eqref{lambda3_CC} for type $CC$ and \eqref{lambda3_C} for type $C$.  In the case when Problem~(Pc) is abnormal, Expression \eqref{lambda3_CC_abnormal} is provided for type $CC$ and \eqref{lambda3_C_abnormal} for type $C$.  It is interesting to note that, in the case when $|\theta_0 - \theta_1| \le \pi$ and $|\theta_1 - \theta_f| \le \pi$, either of the expressions \eqref{lambda3_CC} (with $\lambda_0 > 0$) and \eqref{lambda3_CC_abnormal} (with $\lambda_0 = 0$) can be used, meaning that normal and abnormal adjoint variable solutions exist concurrently.  The same comment holds for the expressions \eqref{lambda3_C} (with $\lambda_0 > 0$) and \eqref{lambda3_C_abnormal}(with $\lambda_0 = 0$), in the case when $|\theta_0 - \theta_1| \le \pi$.  It should, however, be noted that, if $|\theta_0 - \theta_1| > \pi$ or $|\theta_1 - \theta_f| > \pi$, then the solution cannot be abnormal.  These comments justify the following two corollaries.
\endproof
\end{remark}

\begin{corollary}[Equivalence of Adjoint Variables in Normal and Abnormal Cases]  \label{normal_abnormal_1}
\ \vspace*{-3mm}
\begin{itemize}
\item[(a)] Suppose that a feasible solution of type $C$ of Problem~{\em (Pc)} is given.  If $|\theta_0 - \theta_1| \le \pi$, then the adjoint variable $\lambda_3$ exists with any $\lambda_0\ge 0$.  In particular, one has \eqref{lambda3_C} with $\lambda_0 = 1$, and \eqref{lambda3_C_abnormal} with $\lambda_0 = 0$.
\item[(b)] Suppose that a feasible solution of type $CC$ of Problem~{\em (Pc)} is given.  If $|\theta_0 - \theta_1| \le \pi$ and $|\theta_1 - \theta_f| \le \pi$, then the adjoint variable $\lambda_3$ exists with any $\lambda_0\ge 0$.  In particular, one has \eqref{lambda3_CC} with $\lambda_0 = 1$, and \eqref{lambda3_CC_abnormal} with $\lambda_0 = 0$.
\end{itemize}
\end{corollary}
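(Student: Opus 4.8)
The plan is to obtain this corollary as an almost immediate consequence of the explicit multiplier constructions already carried out inside the proof of Theorem~\ref{stationarity}, reinforced by a single structural observation: the positive homogeneity of the maximum principle in the adjoint vector. The conceptual content is that, for the endpoint curve in question, the normal construction (with $\lambda_0=1$) and the abnormal construction (with $\lambda_0=0$) are \emph{simultaneously} admissible exactly when the stated angle bounds hold, and that the normal one can be rescaled to realize every positive value of $\lambda_0$. So the corollary is really a bookkeeping statement identifying precisely which hypotheses feed which construction.

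First I would record the homogeneity remark. Every condition of the maximum principle that is in play here---the adjoint equations \eqref{adjoint1}--\eqref{adjoint3a}, the pointwise minimization encoded in \eqref{control3}, the vanishing-Hamiltonian relation \eqref{H_zero2}, and the nontriviality requirement $\lambda(t)\neq{\bf 0}$---is positively homogeneous of degree one in $(\lambda_0,\lambda_1,\lambda_2,\lambda_3)$; in particular $\rho$ scales linearly with $(\lambda_1,\lambda_2)$ while $\phi$ and the sign of $\lambda_3$ are scale-invariant, so multiplying an admissible tuple by any constant $c>0$ again yields an admissible tuple with $\lambda_0$ replaced by $c\lambda_0$. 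Hence a single normal solution with $\lambda_0=1$ generates admissible multipliers for \emph{every} $\lambda_0>0$, and the abnormal construction supplies the remaining endpoint $\lambda_0=0$.

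For part~(a) I would then invoke the two type-$C$ constructions of Theorem~\ref{stationarity} verbatim. The normal expression \eqref{lambda3_C} (with $\rho=1/2$, $\phi=0$, $\lambda_0=1$) is valid for \emph{any} feasible type-$C$ curve, since the normal phase portrait in Remark~\ref{rem:ellipses}(iii) places no bound on the swept angle; rescaling as above covers all $\lambda_0>0$. The abnormal expression \eqref{lambda3_C_abnormal} (with $\lambda_0=0$) then furnishes $\lambda_0=0$, and this is the sole place the hypothesis enters: the abnormal phase portrait (Remark~\ref{rem:abnormal}, Figure~\ref{abnormal_phase}) forces each bang-arc to have length at most $\pi/a$, i.e.\ swept angle at most $\pi$, which is exactly the condition $|\theta_0-\theta_1|\le\pi$. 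Together these give a valid $\lambda_3$ for every $\lambda_0\ge0$. Part~(b) is entirely parallel: the normal type-$CC$ construction \eqref{lambda3_CC} (with $\rho=2$, $\lambda_0=1$) is again unconstrained in the swept angles for a single switching and, after rescaling, covers all $\lambda_0>0$, while the abnormal construction \eqref{lambda3_CC_abnormal} handles $\lambda_0=0$ under the two bounds $|\theta_0-\theta_1|\le\pi$ and $|\theta_1-\theta_f|\le\pi$, which make each of the two bound-arcs admissible in Figure~\ref{abnormal_phase}.

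The hard part is not any computation---those are already discharged inside Theorem~\ref{stationarity}---but rather being scrupulous about which hypothesis is used where. The delicate point I would emphasize is the asymmetry between the two regimes: the normal constructions carry no angle restriction, so they alone already cover all $\lambda_0>0$ for any feasible curve of the given type, whereas the abnormal constructions are available only under the stated bounds. Since the scaling argument can never reach $\lambda_0=0$ from a normal solution, the abnormal construction is indispensable precisely at that endpoint, and it is the unique entry point for the angle hypotheses; the strength of the corollary lies exactly in asserting the concurrent availability of both regimes.
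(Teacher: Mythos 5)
Your proposal is correct and follows essentially the same route as the paper, which justifies this corollary through Remark~\ref{rem:normal_abnormal} by pointing to the explicit constructions \eqref{lambda3_C}, \eqref{lambda3_C_abnormal}, \eqref{lambda3_CC} and \eqref{lambda3_CC_abnormal} already derived in the proof of Theorem~\ref{stationarity}; your positive-homogeneity rescaling to reach every $\lambda_0>0$ is a harmless explicit addition that the paper leaves implicit. One small caveat: your side remark that the normal constructions carry \emph{no} angle restriction is slightly too strong for type $CC$ with the fixed choice $\rho=2$ (the formula \eqref{lambda3_CC} requires each arc to sweep at most $4\pi/3$, though a different $\rho>\lambda_0$ can be chosen for larger sweeps), but this does not affect the corollary since its hypotheses bound each sweep by $\pi$.
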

Thus we can state: \\[-7mm]
\begin{corollary}[Normality of Abnormal Curves]  \label{normal_abnormal_2}
If Problem~{\em (Pc)} has an abnormal solution, then that abnormal solution (or path) is also a normal solution.  The converse, however, does not hold.
\end{corollary}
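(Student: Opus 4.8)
The plan is to read the conclusion off from Lemma~\ref{abnormal} together with Corollary~\ref{normal_abnormal_1}, since the latter already records that, in the relevant geometric regimes, adjoint variables exist for \emph{every} $\lambda_0\ge 0$. First I would suppose that Problem~(Pc) admits an abnormal solution, so that $\lambda_0=0$ and there is a nontrivial multiplier vector satisfying the maximum principle along the given path. By Lemma~\ref{abnormal}, that path must be of type $C$ or $CC$; these are precisely the two types treated in Corollary~\ref{normal_abnormal_1}.

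Next I would verify that the hypotheses of Corollary~\ref{normal_abnormal_1} are met automatically. By Remark~\ref{rem:abnormal} (the abnormal phase portrait), every bang-arc of an abnormal path has length at most $\pi/a$; since $\dot\theta(t)=u(t)=\pm a$ along a bang-arc, an arc of length $\ell$ changes $\theta$ by $|\Delta\theta|=a\,\ell\le\pi$. Hence a type-$C$ abnormal path satisfies $|\theta_0-\theta_1|\le\pi$, and a type-$CC$ abnormal path satisfies both $|\theta_0-\theta_1|\le\pi$ and $|\theta_1-\theta_f|\le\pi$. These are exactly the angle restrictions assumed in Corollary~\ref{normal_abnormal_1}(a)--(b), so that corollary supplies, for the \emph{same} path, an admissible switching function $\lambda_3$ with $\lambda_0=1>0$ (namely \eqref{lambda3_C} for type $C$ and \eqref{lambda3_CC} for type $CC$). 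Thus the abnormal path is also a normal solution, which is the forward implication. I expect the only delicate point to be checking that the $\pi/a$ bang-arc bound translates faithfully into these angle inequalities, so that the corollary's hypotheses are genuinely satisfied rather than merely invoked.

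Finally, to show that the converse fails I would exhibit a normal solution that is not abnormal. The cleanest witness is any optimal path of type $CSC$ (or any subtype containing a straight segment $S$): by Lemma~\ref{rho_sing} such a path forces $\rho=\lambda_0>0$, so it is normal, while by Lemma~\ref{abnormal} an abnormal path can only be of type $C$ or $CC$ and hence never contains an $S$. Alternatively, a normal type-$C$ or $CC$ path with a bang-arc longer than $\pi/a$ (equivalently $|\theta_0-\theta_1|>\pi$, or $|\theta_1-\theta_f|>\pi$) cannot be abnormal, by Remark~\ref{rem:normal_abnormal}. Either witness shows that normality does not imply abnormality, completing the argument.
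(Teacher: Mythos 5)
Your argument is correct and follows essentially the same route as the paper, which justifies this corollary via Remark~\ref{rem:normal_abnormal} and Corollary~\ref{normal_abnormal_1}: an abnormal path is of type $C$ or $CC$ with each bang-arc subtending an angle of at most $\pi$, so the normal adjoint constructions \eqref{lambda3_C} and \eqref{lambda3_CC} apply concurrently with the abnormal ones, while paths containing an $S$ segment (or with an angle change exceeding $\pi$) are necessarily normal only. Your explicit check that the $\pi/a$ arc-length bound translates into the angle hypotheses of Corollary~\ref{normal_abnormal_1} is a welcome detail that the paper leaves implicit.
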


\section{A Numerical Method for Markov-Dubins Curves}
\label{MD_num}

Theorem~\ref{Dubins} facilitates the employment of an efficient numerical scheme for finding a Markov-Dubins curve, or path, as it characterizes all of the types of solutions, where each subarc solution can be written analytically, and so the infinite-dimensional optimization problem~(P) is reduced to a combinatorial decision making with a relatively small number of choices.  Theorem~\ref{Dubins} asserts that a Markov-Dubins path will be of one of the types $CSC$ and $CCC$, or a subset of these types, giving rise to seven possibilities, as listed in (i) and (ii) in the proof of Theorem~\ref{Dubins}. Recall that a $C$ subarc may happen to be a {\em left-turn} circular arc, denoted~$L$, where $\dot\theta(t)>0$, or a {\em right-turn} circular arc, denoted~$R$, where $\dot\theta(t)<0$.  This increases the number of possibilities: By using the signed curvature, Markov-Dubins path can be characterized as one of particular types listed in the set
\begin{equation}  \label{LSR}
\{LRL, RLR, LSL, LSR, RSL, RSR\}
\end{equation}
or a subset of one of the types in the list, altogether yielding 15 possibilities.  All of these possible types, including, for example, the types $LR$, $RL$, $RS$, $L$, and so on, can be checked algebraically to find all feasible paths and select the shortest of them---see, for example, \cite{ShkLum2001}, for a classification and identification of these path types.  We propose an alternative numerical approach to identifying the type of the path and computing the duration of each component, or subarc, of the path.  Ultimately, the approach we present here should serve as a building block for a numerical approach for constructing Markov-Dubins interpolating curves, in which case, the much larger number of possible types of curves is prohibitive for an algebraic approach.

The technique we propose involves parameterization of the shortest path problem with respect to the terminal and switching times, i.e., the times at which the path switches from one subarc to another, say, from $L$ to $S$.  This approach is markedly different from previous approaches to computing the Markov-Dubins path in the literature.  Efficient computation of switching times, referred to as switching time optimization, for optimal control problems have been extensively studied in the past---see~\cite{KayNoa2003,MauBueKimKay2005,KayMau2014,Vossen2006} and the references therein.  In comparison with these general studies, we do not have to integrate numerically the ODEs involved.  We can write down a concatenation of the analytic solutions for the state and control variables along each subarc in terms of the switching times---this avoids the task of discretizing the problem and thus makes computations fast---see~\cite{Kaya2010,BanKay2013} for examples of discretization of general optimal control problems.

Let $L_{\xi_1}$ denote a left-turn arc with length $\xi_1$, $R_{\xi_2}$ a right-turn arc with length $\xi_2$ and $S_{\xi_3}$ a straight line with length $\xi_3$.  Similarly, let $L_{\xi_4}$ denote a left-turn arc with length $a\xi_4$ and $R_{\xi_5}$ a right-turn arc with length $\xi_5$.  Then the types of solution arcs described  in Theorem~\ref{Dubins} or the more particular types of solution arcs given in \eqref{LSR} can all be represented by the string
\[
L_{\xi_1}R_{\xi_2}S_{\xi_3}L_{\xi_4}R_{\xi_5}\,.
\]
For example, the type $RLR$ is given with $\xi_1=\xi_3=0$ and $\xi_2,\xi_4,\xi_5 > 0$. On the other hand, the type $LR$ is obtained either with $\xi_3=\xi_4=\xi_5=0$ and $\xi_1,\xi_2 > 0$ or with $\xi_1=\xi_2=\xi_3=0$ and $\xi_4,\xi_5 > 0$.

Let the initial time $t_0 := 0$ and the terminal time $t_5 := t_f$.  Also define the {\em switching times} $t_j$, $j = 1,\ldots 4$, such that
\begin{equation}  \label{arc_durations}
\xi_j := t_j - t_{j-1}\,,\quad\mbox{for } j = 1,\ldots,5\,.
\end{equation}
The lengths $\xi_j$ may also be referred to as {\em arc durations}.  Note that along an arc of type $L$ or $R$ the optimal control is bang--bang; in particular, $u(t) \equiv a$ along an arc of type $L$ and $u(t) \equiv -a$ along an arc of type $R$.  Along an arc of type $S$, on the other hand, optimal control is singular; in particular, $u(t) \equiv 0$.  Then the ODEs in Problem~(Pc) can be solved as follows.  For $t_{j-1} \le t < t_j$,
\begin{eqnarray}
&& \theta(t) = \theta(t_{j-1}) + u(t)\,(t - t_{j-1})\,,\quad\mbox{ if } j = 1,\ldots,5\,, \label{theta_arc} \\[2mm]
&& x(t) = \left\{\begin{array}{ll}
x(t_{j-1}) + (\sin\theta(t) - \sin\theta(t_{j-1})) / u(t)\,, & \mbox{ if } j = 1,2,4,5\,, \\[1mm]
x(t_{j-1}) + \cos\theta(t)\,(t - t_{j-1})\,, & \mbox{ if } j = 3\,,
\end{array}\right. \label{x_arc} \\[2mm]
&& y(t) = \left\{\begin{array}{ll}
y(t_{j-1}) + (\cos\theta(t) - \cos\theta(t_{j-1})) / u(t)\,, & \mbox{ if } j = 1,2,4,5\,, \\[1mm]
y(t_{j-1}) + \sin\theta(t)\,(t - t_{j-1})\,, & \mbox{ if } j = 3\,, \label{y_arc}
\end{array}\right.
\end{eqnarray}
where
\begin{equation}  \label{arc_control}
u(t) = \left\{\begin{array}{rl}
a\,, & \mbox{ if } j = 1,4\,, \\[1mm]
-a\,, & \mbox{ if } j = 2,5\,, \\[1mm]
0\,, & \mbox{ if } j = 3\,.
\end{array}\right.
\end{equation}
Note that the control variable $u(t)$ is a piecewise constant function, which takes here the sequence of values $\{a, -a, 0, a, -a\}$.  After evaluating the state variables in \eqref{theta_arc}--\eqref{y_arc} at the switching times and carrying out algebraic manipulations, one can equivalently rewrite Problem~(Pc) as follows.
\[
\mbox{(Ps)}\left\{\begin{array}{rl}
\min &\ \ds t_f = \sum_{j=1}^5 \,\xi_j
   \\[4mm]
\mbox{s.t.} &\ds\ x_0 - x_f + \frac{1}{a}\left(-\sin\theta_0 + 2\,\sin\theta_1 - 2\,\sin\theta_2 + 2\,\sin\theta_4 - \sin\theta_f \right) + \xi_3\,\cos\theta_2 = 0\,, \\[3mm] 
  &\ds\ y_0 - y_f + \frac{1}{a}\left(\cos\theta_0 - 2\,\cos\theta_1 + 2\,\cos\theta_2 - 2\,\cos\theta_4 + \cos\theta_f \right) + \xi_3\,\sin\theta_2 = 0\,, \\[3mm]
  &\ds\ \sin\theta_f = \sin\theta_5\,,\ \ \cos\theta_f = \cos\theta_5\,,  \\[2mm]
  &\ds\ \xi_j \ge 0\,,\ \ \mbox{ for } j = 1,\ldots,5\,,
\end{array}\right.
\]
where
\begin{equation}  \label{thetas}
\theta_1 = \theta_0 + a\,\xi_1\,,\qquad
\theta_2 = \theta_1 - a\,\xi_2\,,\qquad
\theta_4 = \theta_2 + a\,\xi_4\,,\qquad
\theta_5 = \theta_4 - a\,\xi_5\,.
\end{equation}
Substitution of $\theta_1$, $\theta_2$, $\theta_4$ and $\theta_5$ in \eqref{thetas} into Problem~(Ps) yields a finite-dimensional nonlinear optimization problem in just five variables, $\xi_j$,  $j = 1,\ldots,5$.

\begin{remark}  \label{rem:angle}  \rm
With the constraints $\sin\theta_f = \sin\theta_5$ and $\cos\theta_f = \cos\theta_5$, we make sure that we satisfy the slope condition at the terminal point.  Otherwise, the constraint $\theta_f = \theta_5$ is stronger, and imposing it might result in missing some of the feasible solutions---for example, the optimal solution itself given in Figure~\ref{fig:MD_2pts_2}(a) for the problem in Example~2.
\endproof
\end{remark}

\begin{remark}  \label{rem:global}  \rm
Problem~(Ps) can be solved by standard optimization methods and software, for example, Algencan~\cite{Andreani2007,BirMar2014}, which implements augmented Lagrangian techniques, or Ipopt~\cite{WacBie2006}, which implements an interior point method, or SNOPT~\cite{GilMurSau2005}, which implements a sequential quadratic programming algorithm.  For general nonconvex optimization problems like Problem~(Ps), what one can hope for, by using these software, is to get (at best) a locally optimal solution.  

Recall that, by Theorem~\ref{stationarity}, there will be as many stationary solutions as the number of feasible solutions of the listed types.  Suppose that a solution of Problem~(Pc) is found by software, with locally optimal length $t_f = L_0$.  Since this solution is likely not to be a global solution, Problem~(Ps) can be modified by adding the constraint $\sum_{j=1}^5 \,\xi_j \le L_0 + \varepsilon$, with some small $\varepsilon > 0$, and then Problem~(Ps) numerically solved again, which might provide a curve with a length $L_1 < L_0$.  Suppose that a shorter curve with length $L_1$ is found.  Then the same procedure can be repeated in, say, Step $i+1$, $i = 1,2,\ldots$, by adding the constraint $\sum_{j=1}^5 \,\xi_j \le L_i + \varepsilon$, until it is no longer possible to find a curve with a length shorter than $L_i$.

At this point, one might rightfully argue that, since the number of variables in Problem~(Ps) is only five, rather than using the ``global procedure'' described in the previous paragraph, one can employ some general global optimization method and associated software which might serve as a remedy in finding a shortest curve solution.  However, as we pointed earlier, Problem~(Ps) is set up here as a prototype/building block for the Markov-Dubins interpolating curves, in which case, the companion of Problem~(Ps) will in general have a significantly larger number of variables, prohibiting employment of a global solver.
\endproof
\end{remark}

\subsection{Numerical experiments}

In this section, we present numerical experiments by solving Problem~(Ps) under various sets of data and use the globalization strategy described in Remark~\ref{rem:global}, to construct Markov-Dubins curves and other stationary curves.  For solving Problem~(Ps), we use Ipopt, version 3.12, a popular optimization software based on an interior point method~\cite{WacBie2006}.  AMPL~\cite{AMPL} is used as an optimization modelling language, which employs Ipopt as a solver.  All Ipopt tolerances are set at $10^{-15}$, with MA77 chosen as the linear solver. \\

\noindent
{\bf Example 1} \\[2mm]
Figure~\ref{fig:MD_2pts}(a) depicts a CSC type, or more specifically an LSR type, Markov Dubins path of maximum curvature of 3 units (or, minimum turning radius of 1/3 units) between the initial and terminal points $(0,0)$ and $(1,1)$, obtained by solving Problem~(Ps), a number of times by adding a constraint, which imposes an upper bound on the length of the curve.  The initial and terminal orientation angles are specified as $-60^\circ$ and $-30^\circ$, respectively.

\begin{figure}[t]
\begin{minipage}{80mm}
\begin{center}
\psfrag{x}{\small $x$}
\psfrag{y}{\small $y$}
\includegraphics[width=80mm]{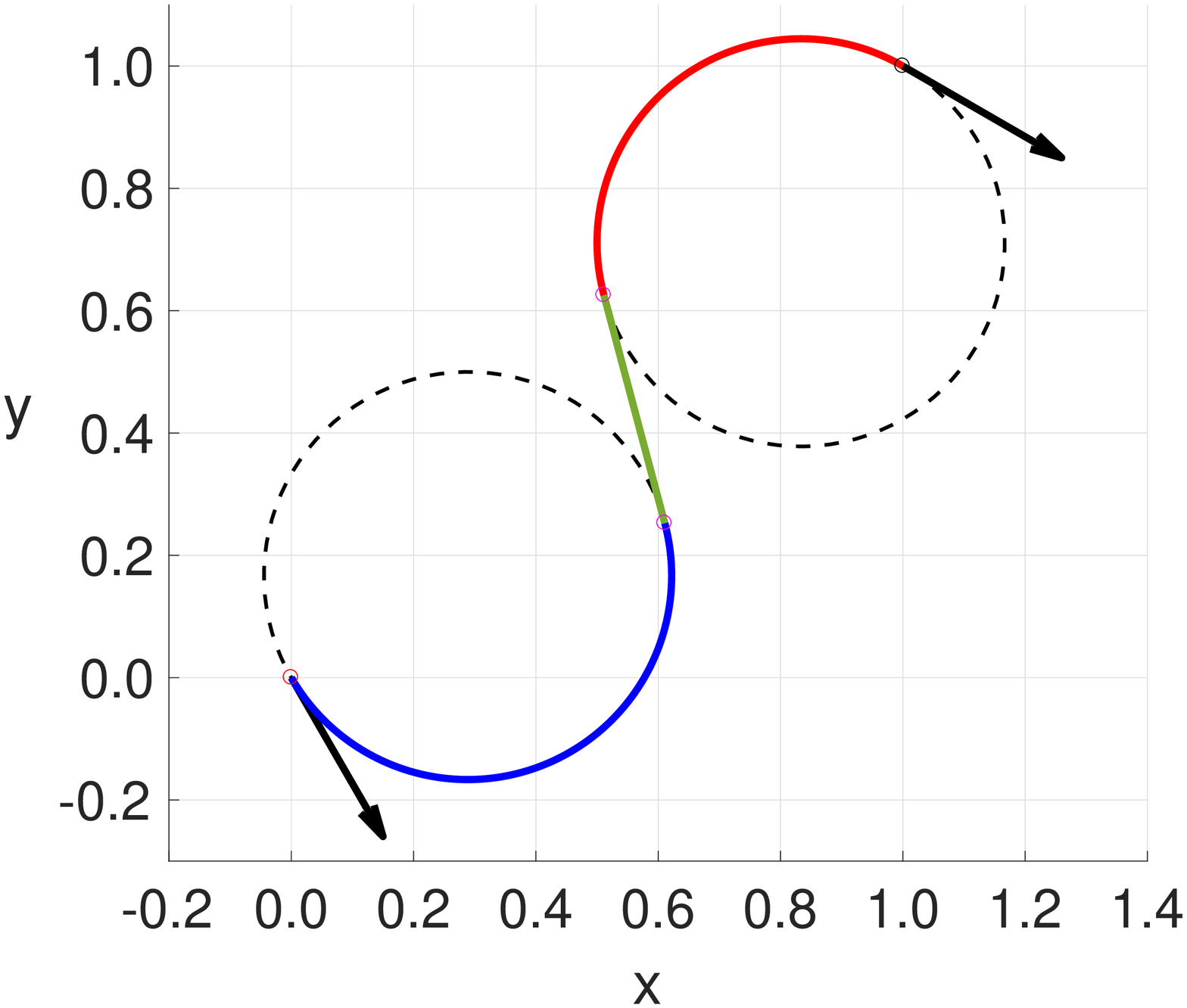} \\[3mm]
{\small (a) Markov-Dubins curve}
\end{center}
\end{minipage}
\begin{minipage}{80mm}
\begin{center}
\psfrag{t}{\small $t$}
\psfrag{lambda3}{\small\hspace*{0mm} $\lambda_3(t)$}
\includegraphics[width=80mm]{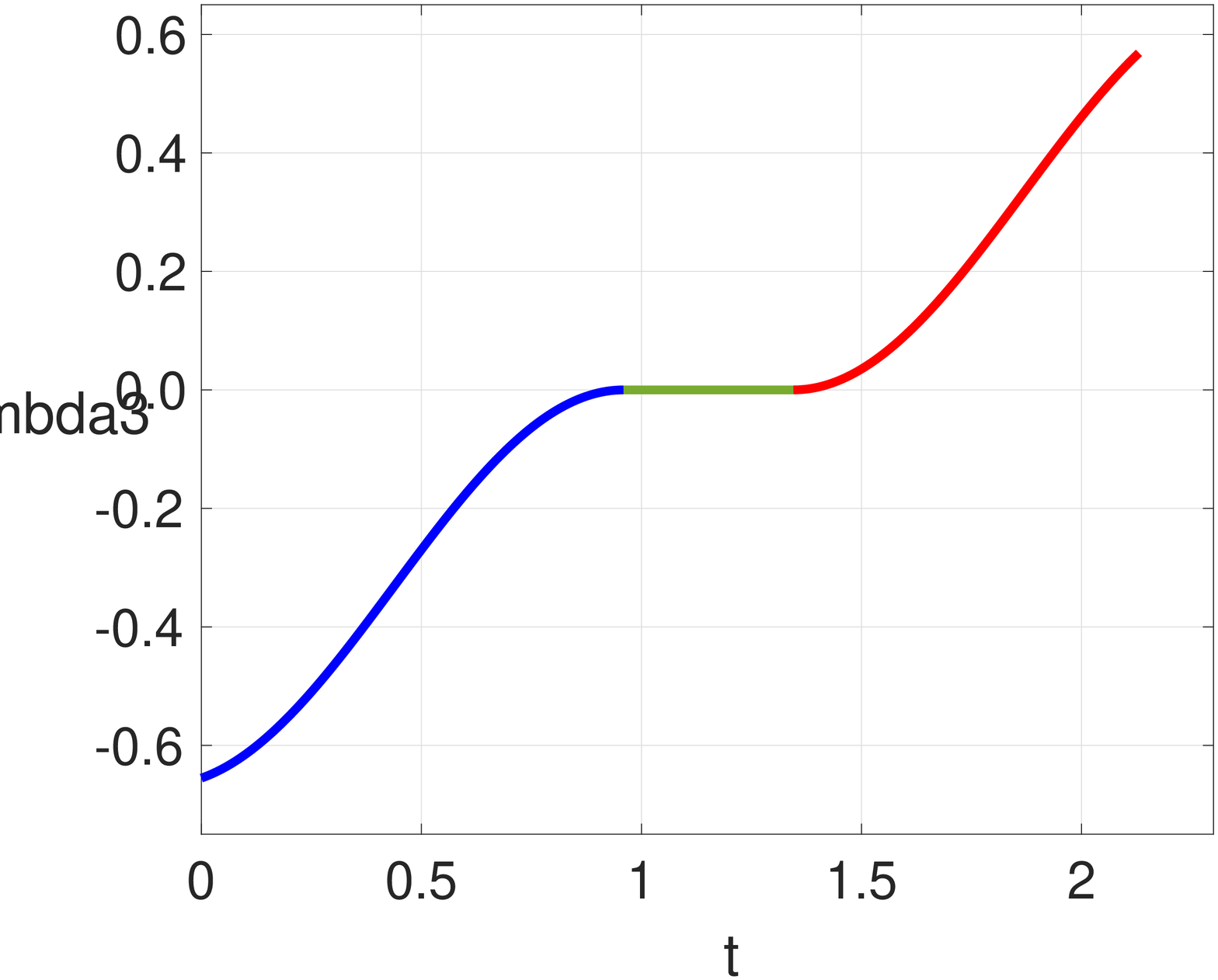} \\[3mm]
{\small (b) Switching function, $\lambda_3$}
\end{center}
\end{minipage}
\\[5mm]
\begin{minipage}{80mm}
\begin{center}
\psfrag{lambda3}{\small $\lambda_3$}
\psfrag{lambda3dot}{\small $\dot{\lambda}_3$}
\includegraphics[width=80mm]{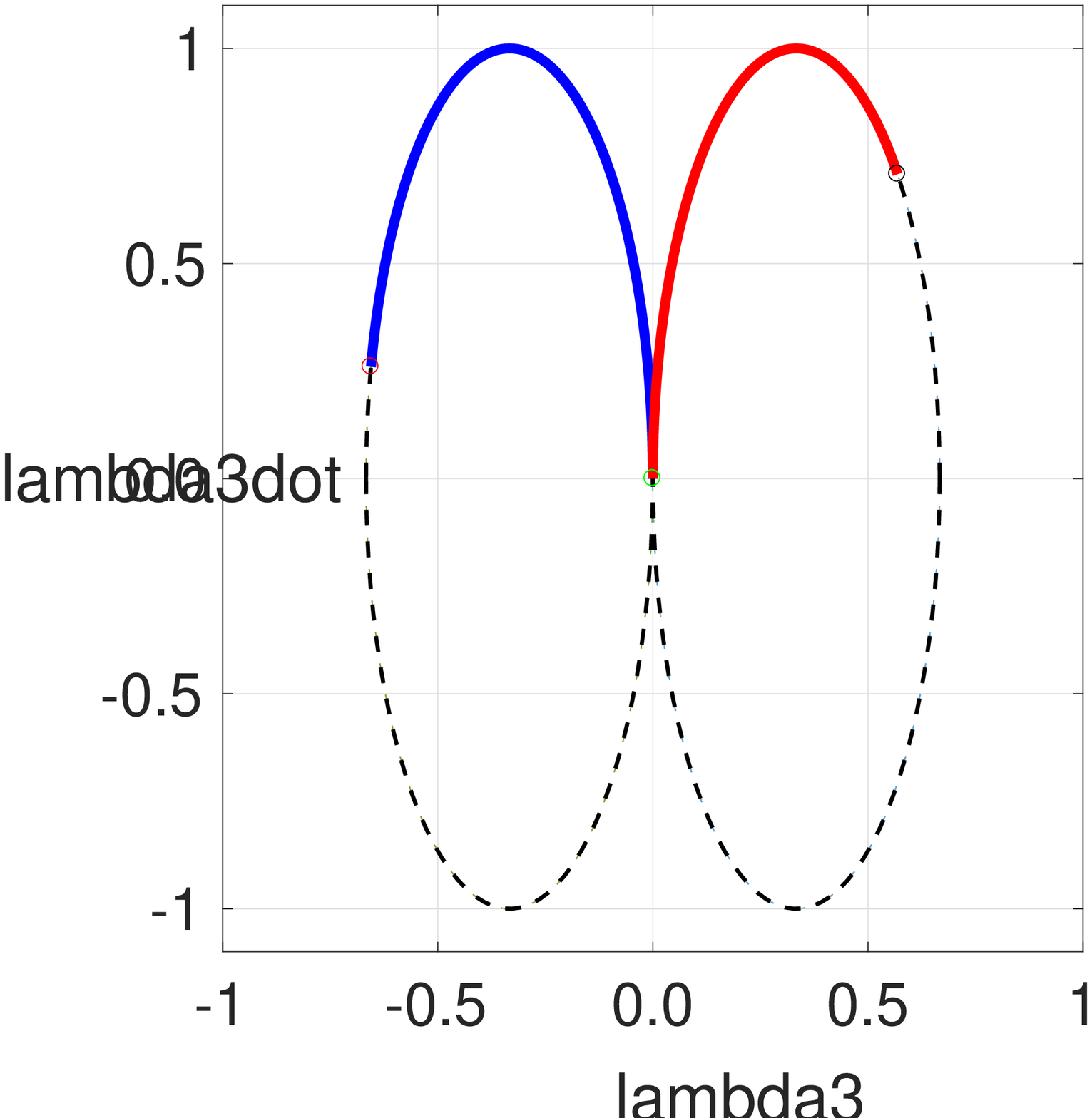} \\[3mm]
{\small (c) Phase diagram for $\lambda_3$}
\end{center}
\end{minipage}
\begin{minipage}{80mm}
\begin{center}
\psfrag{x}{\small $x$}
\psfrag{y}{\small $y$}
\includegraphics[width=80mm]{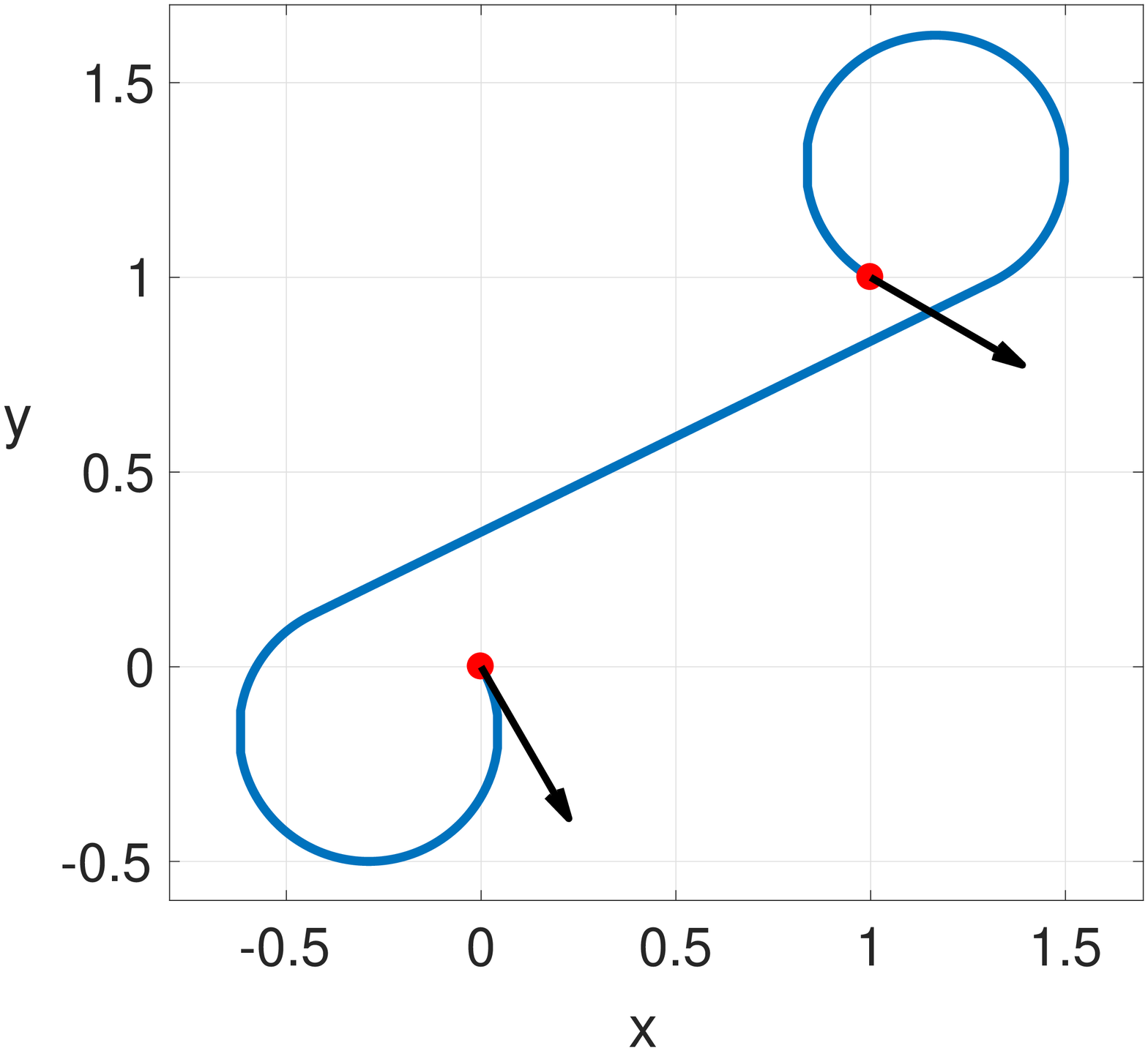} \\[3mm]
{\small (d) A stationary solution curve}
\end{center}
\end{minipage}
\
\caption{\small\sf (a) Markov-Dubins curve from $(0,0,-\pi/3)$ to $(1,1,-\pi/6)$ with a maximum curvature of 3 units, (b)--(c) the graphs for the adjoint variable~$\lambda_3$ and (d) a longer stationary curve between the same points.}
\label{fig:MD_2pts}
\end{figure}

The solution to Problem~(Ps), by following the procedure in the second paragraph of Remark~\ref{rem:global}, results in a Markov-Dubins curve of type $L_{\xi_1}S_{\xi_3}R_{\xi_5}$ (with ${\xi_2} = {\xi_4} = 0$), where
\[
\xi_1 = 0.95958462\,,\quad
\xi_3 = 0.38582465\,,\quad
\xi_5 = 0.78505169\,.
\]
Re-define the switching times, for simplicity, as $t_1 = \xi_1$, $t_2 = \xi_1 + \xi_3$ and $t_f = \xi_1 + \xi_3 + \xi_5$, which are different from the definition given in \eqref{arc_durations} but more intuitive for our practical purposes.  Then
\[
t_1 = 0.95958462\,,\quad
t_2 = 1.34540927\,,\quad
t_f = 2.13046097\,.
\]
The graph of the switching function, $\lambda_3(t)$, in Figure~\ref{fig:MD_2pts}(b), as well as the phase plane trajectory in Figure~\ref{fig:MD_2pts}(c), have been drawn simply by using the expression~\eqref{lambda3_CSC}.  These graphs are included here for illustration purposes; otherwise, we already know by Theorem~\ref{stationarity} that any numerical solution to Problem~(Ps) of certain types satisfies the Pontryagin maximum principle.  

In Figure~\ref{fig:MD_2pts}(d), we provide a stationary solution of Problem~(Ps), which is of type $R_{\xi_2}S_{\xi_3}L_{\xi_4}$ (with ${\xi_1} = {\xi_5} = 0$), where
\[
\xi_2 = 1.5934841453\,,\quad
\xi_3 = 1.9472018572\,,\quad
\xi_4 = 1.7680170705\,,
\]
and so, with $t_1 = \xi_2$, $t_2 = \xi_2 + \xi_3$ and $t_f = \xi_2 + \xi_3 + \xi_4$,
\[
t_1 = 1.5934841453\,,\quad
t_2 = 3.540686003\,,\quad
t_f = 5.308703073\,.
\]
A comparison of the lengths $t_f$ of the two curves tells us that the stationary curve provided in Figure~\ref{fig:MD_2pts}(d) is more than twice longer than the Markov-Dubins curve in Figure~\ref{fig:MD_2pts}(a), although it does not appear so since Figure~\ref{fig:MD_2pts}(d) is presented on a different scale.  It should be noted that, in addition to the stationary solution curve in Figure~\ref{fig:MD_2pts}(d), there are two further stationary solution curves, which are of type $RSR$ with $t_f = 3.34456289$, and type $LSL$ with $t_f = 3.69362874$, respectively, either of which can be found by solving Problem~(Ps). Each of these solutions is normal, i.e., $\lambda_0 \neq 0$, by the results proved before.\\

\noindent
{\bf Example 2} \\[2mm]
The only difference of the problem in the present example from the one in Example~1 is that as the terminal point we take $(0.4,0.4)$ instead of $(1,1)$.  Figure~\ref{fig:MD_2pts_2} depicts six of the seven stationary solutions to Problem~(Pc), obtained by solving Problem~(Ps), the first one of which, shown in Figure~\ref{fig:MD_2pts_2}(a), is the Markov-Dubins curve.  The seventh stationary solution, which is not shown in the figure, is of type $RSL$ with $t_f = 4.54008162$.   As can be seen, there are two stationary solutions each of types $RLR$ (Figure\ref{fig:MD_2pts_2}(b) and (e)) and $LRL$ (Figure\ref{fig:MD_2pts_2}(d) and (f)). There does not seem to be a stationary (or feasible) solution of type $LSR$ or of any other type which concatenates less than three subarcs.

As in Example~1, the adjoint variable, or the switching function, $\lambda_3(t)$, can be easily computed, for each of the solutions in Figure~\ref{fig:MD_2pts_2}(a)--(f).  For the solutions in Figure~\ref{fig:MD_2pts_2}(a)--(b) and (e), the formula in~\eqref{lambda3_CSC}, and for the solutions in Figure~\ref{fig:MD_2pts_2}(c)--(d), the formula in~\eqref{lambda3_CCC} can be used.  Note that by the results obtained before, each of these solutions is normal, i.e., $\lambda_0 \neq 0$. \\

\afterpage{\clearpage}
\begin{figure}[t]
\begin{minipage}{80mm}
\begin{center}
\psfrag{x}{\small $x$}
\psfrag{y}{\small $y$}
\includegraphics[width=80mm]{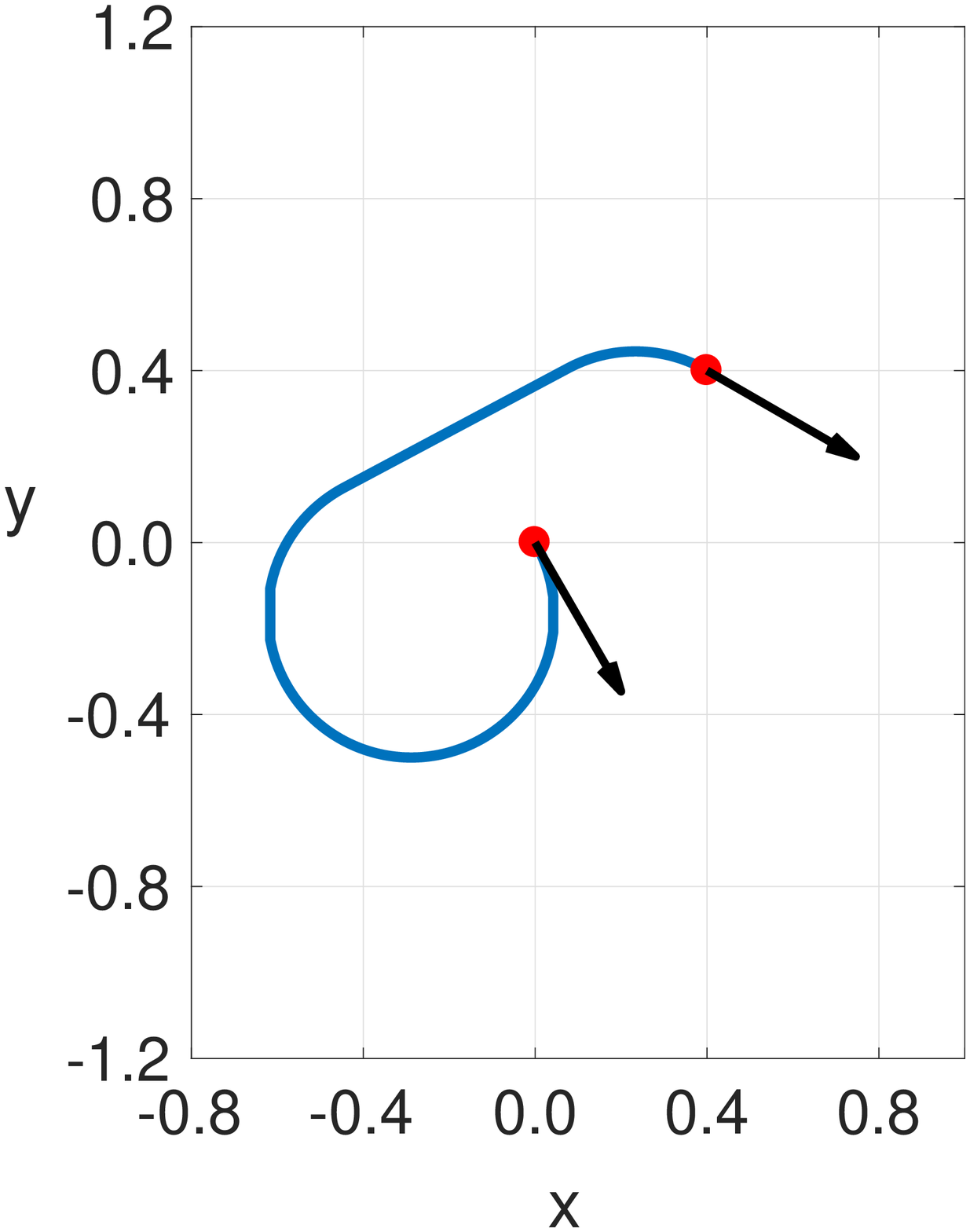} \\[3mm]
{\small (a) Type $RSR$;\ \ $t_f = 2.51127753$}
\end{center}
\end{minipage}
\begin{minipage}{80mm}
\begin{center}
\psfrag{x}{\small $x$}
\psfrag{y}{\small $y$}
\includegraphics[width=60mm]{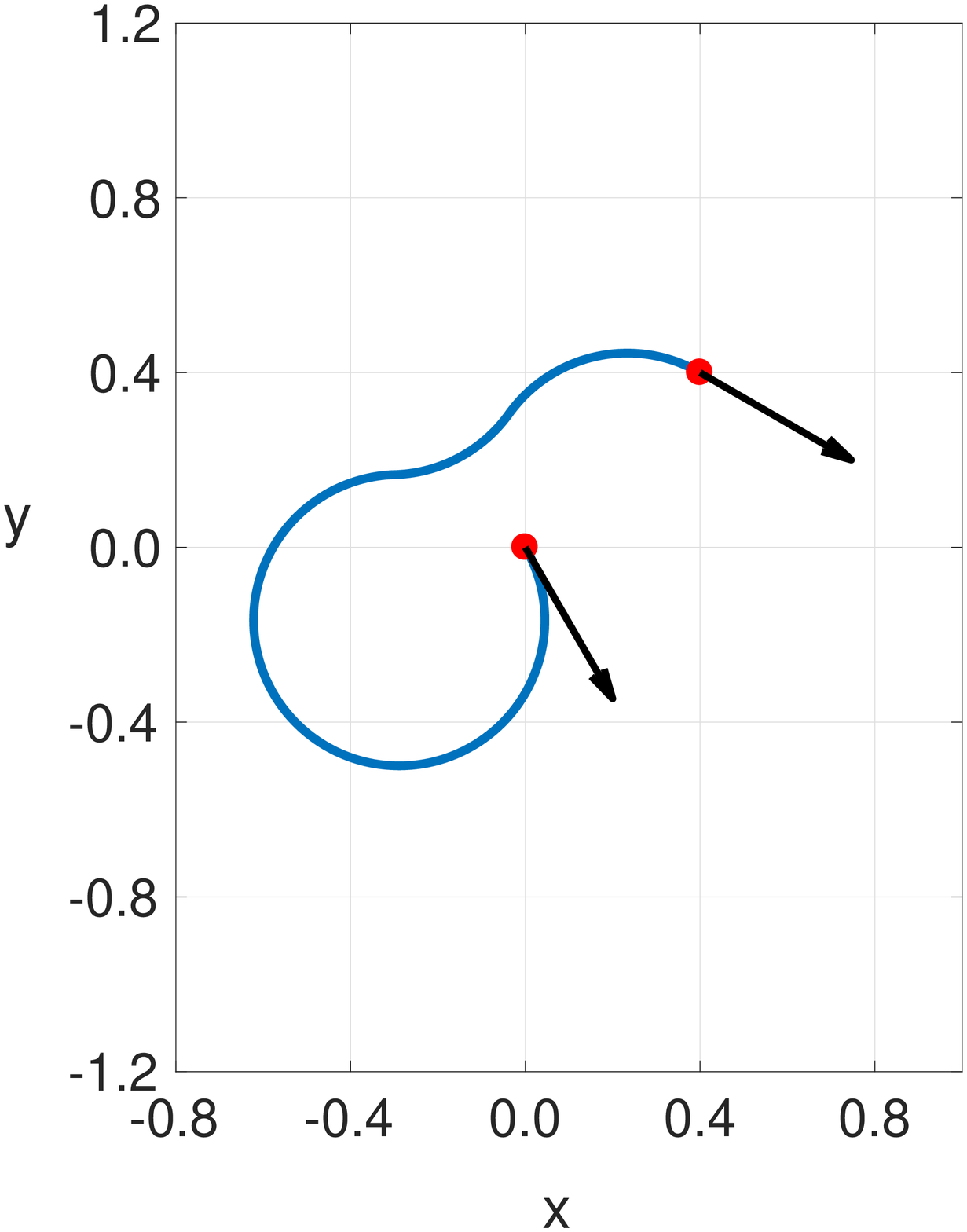} \\[3mm]
{\small (b) Type $RLR$;\ \ $t_f = 2.53262033$}
\end{center}
\end{minipage}
\\[5mm]
\begin{minipage}{80mm}
\begin{center}
\psfrag{x}{\small $x$}
\psfrag{y}{\small $y$}
\includegraphics[width=80mm]{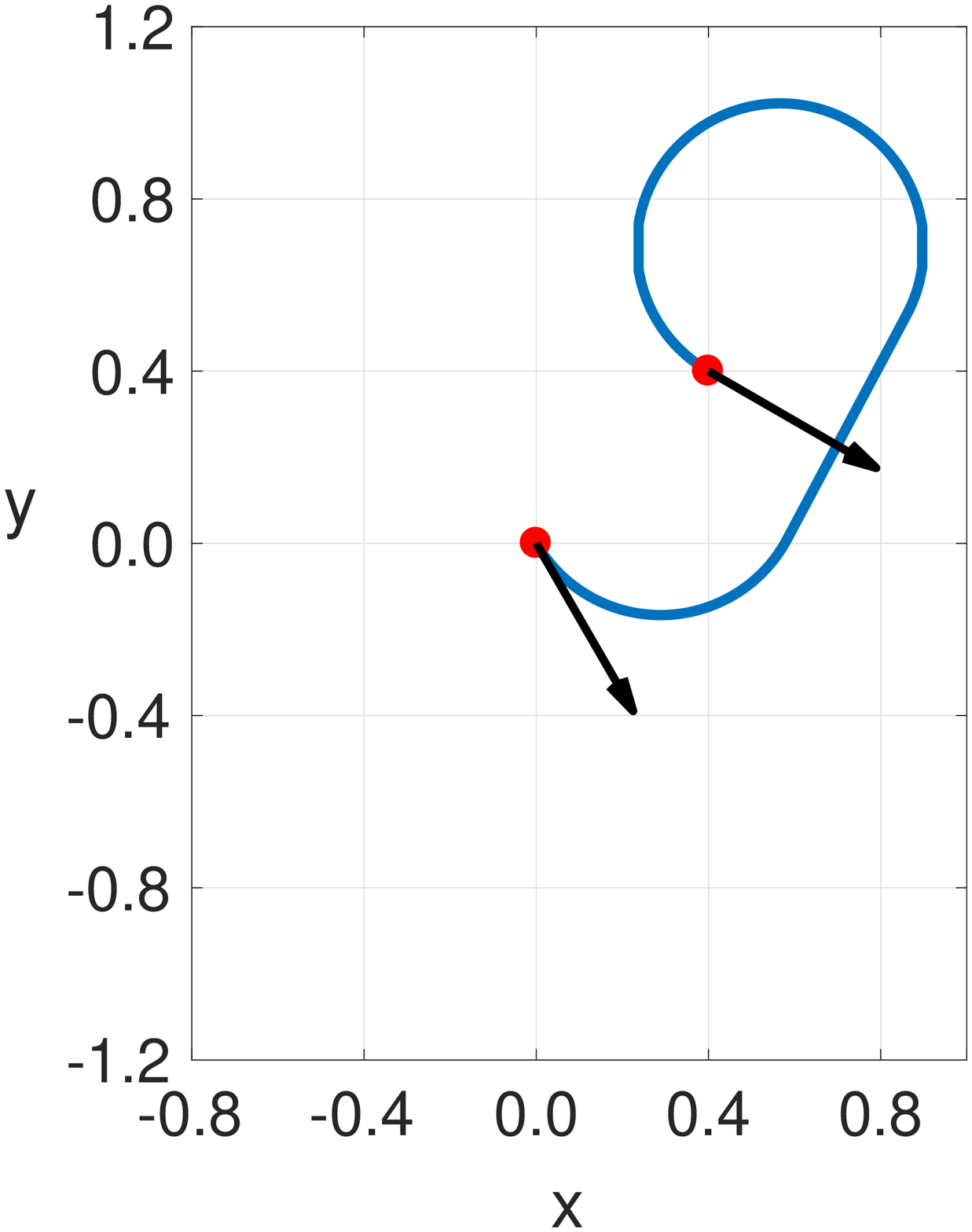} \\[3mm]
{\small (c) Type $LSL$;\ \ $t_f = 2.86034339$}
\end{center}
\end{minipage}
\begin{minipage}{80mm}
\begin{center}
\psfrag{x}{\small $x$}
\psfrag{y}{\small $y$}
\includegraphics[width=60mm]{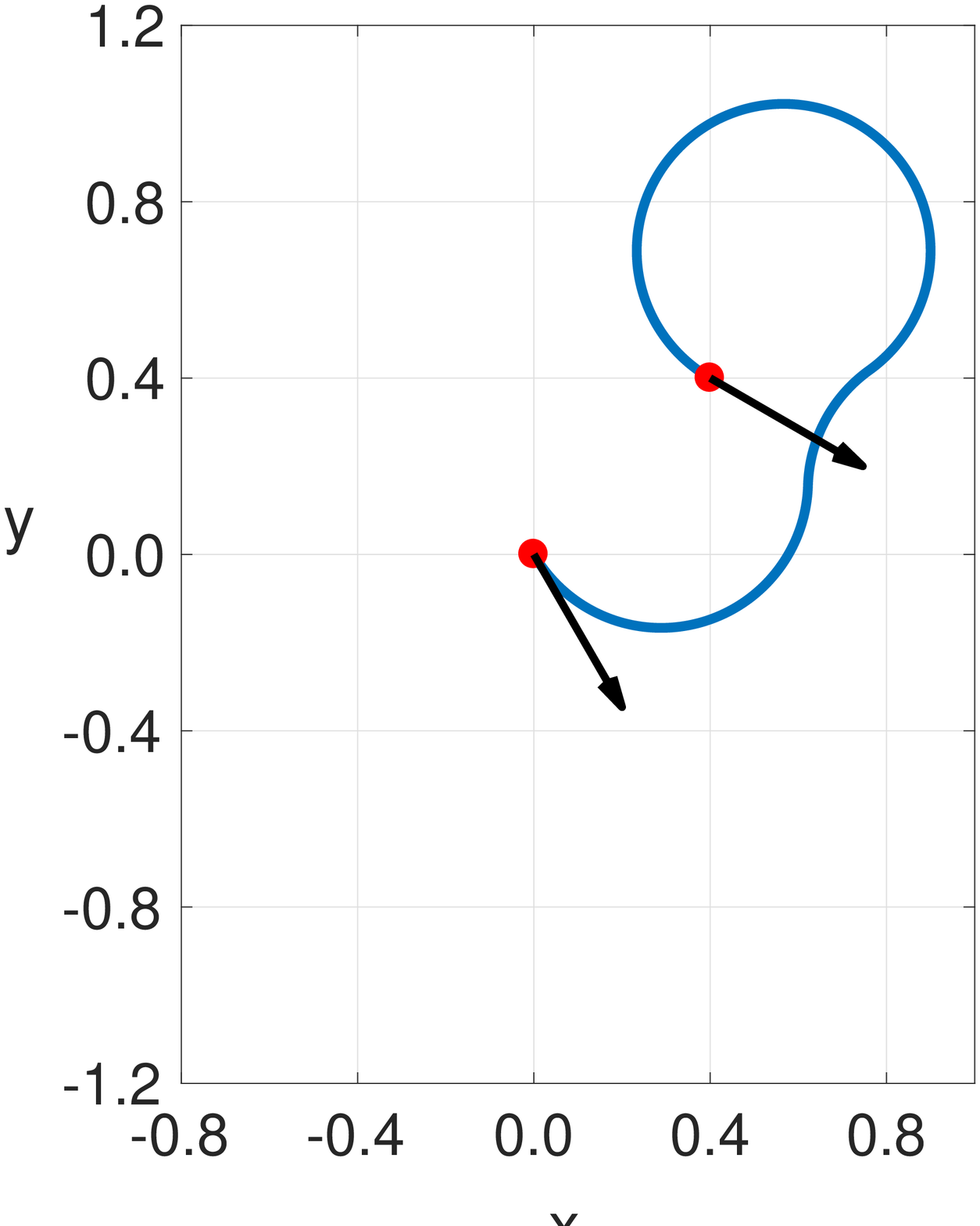} \\[3mm]
{\small (d) Type $LRL$;\ \ $t_f = 2.88168618$}
\end{center}
\end{minipage}
\\[5mm]
\begin{minipage}{80mm}
\begin{center}
\psfrag{x}{\small $x$}
\psfrag{y}{\small $y$}
\includegraphics[width=80mm]{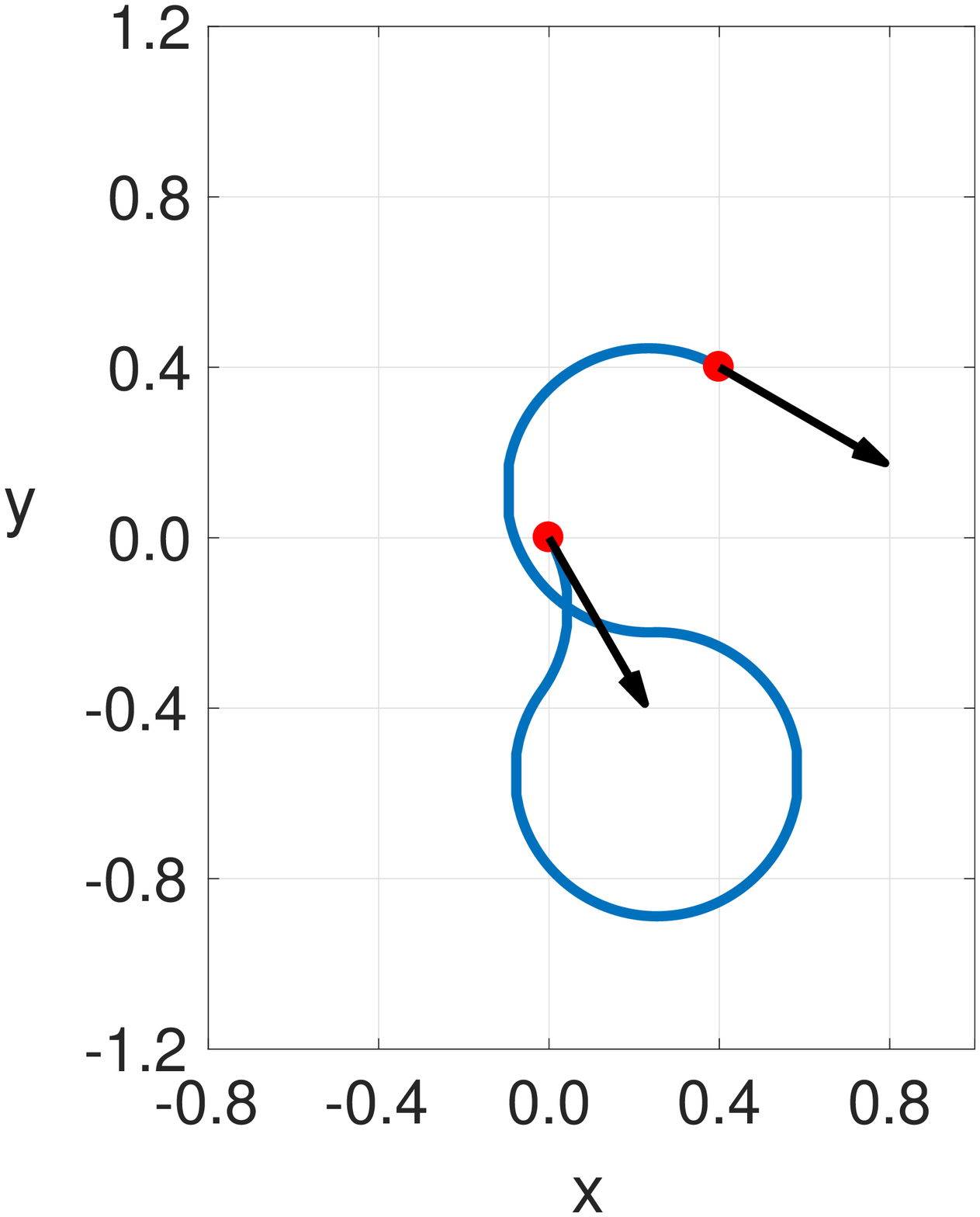} \\[3mm]
{\small (e) Type $RLR$;\ \ $t_f = 3.40149913$}
\end{center}
\end{minipage}
\begin{minipage}{80mm}
\begin{center}
\psfrag{x}{\small $x$}
\psfrag{y}{\small $y$}
\includegraphics[width=80mm]{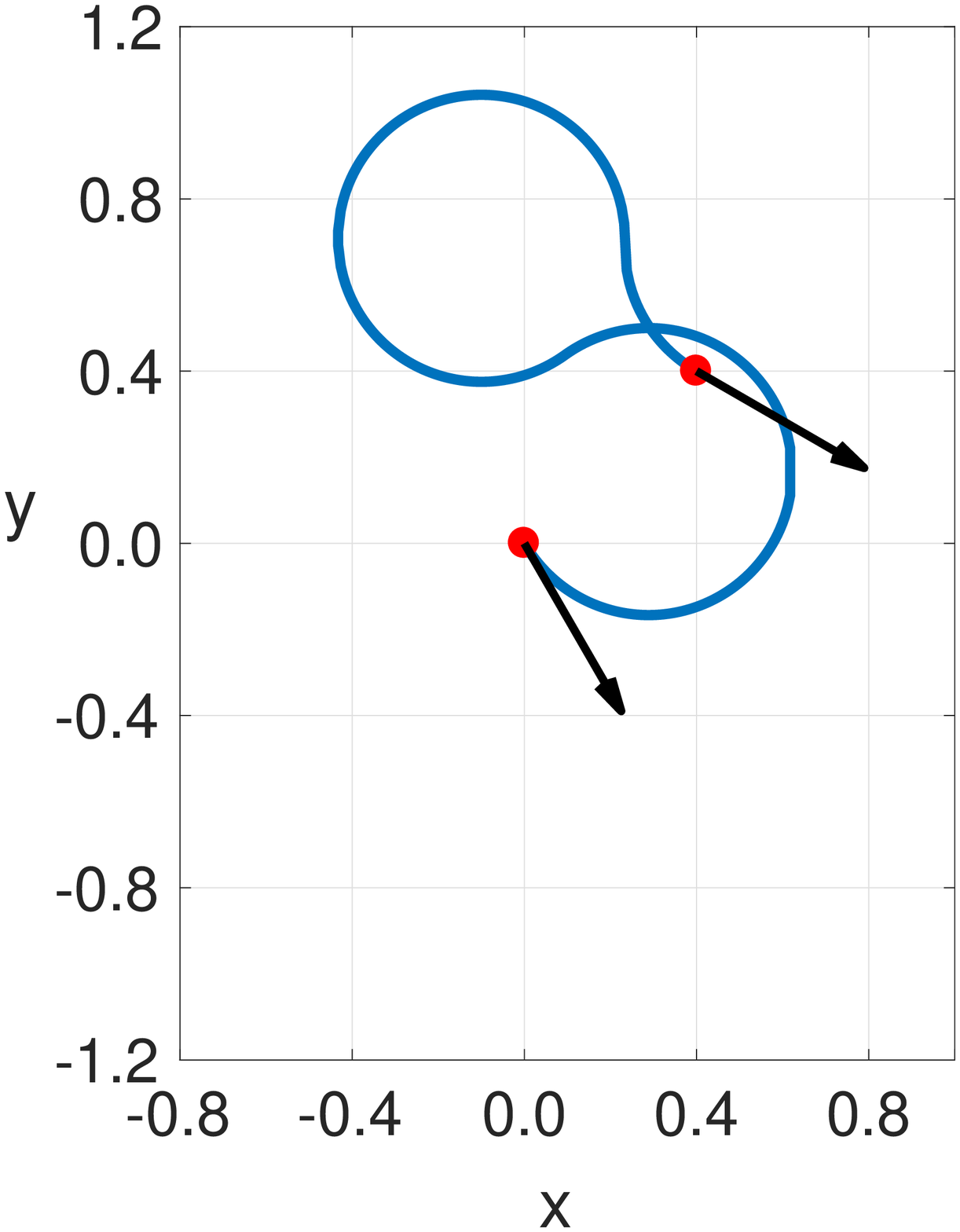} \\[3mm]
{\small (f) Type $LRL$;\ \ $t_f = 3.75056498$}
\end{center}
\end{minipage}
\
\caption{\small\sf (a) Markov-Dubins curve from $(0,0,-\pi/3)$ to $(0.4,0.4,-\pi/6)$ with a maximum curvature of 3 units and (b)--(f) some other stationary curves between the same oriented points.}
\label{fig:MD_2pts_2}
\end{figure}
\clearpage

\noindent
{\bf Example 3 (An Abnormal Case)} \\[2mm]
Here, we provide an example to the solution of an abnormal problem.  Suppose that we want to find a Markov Dubins path of maximum curvature of 1 unit (or, minimum turning radius of 1 unit) between the initial and terminal points $(0,0)$ and $(4,4)$, where the initial and terminal orientation angles are both $-90^\circ$.

\begin{figure}[t]
\begin{minipage}{80mm}
\begin{center}
\psfrag{x}{\small $x$}
\psfrag{y}{\small $y$}
\includegraphics[width=80mm]{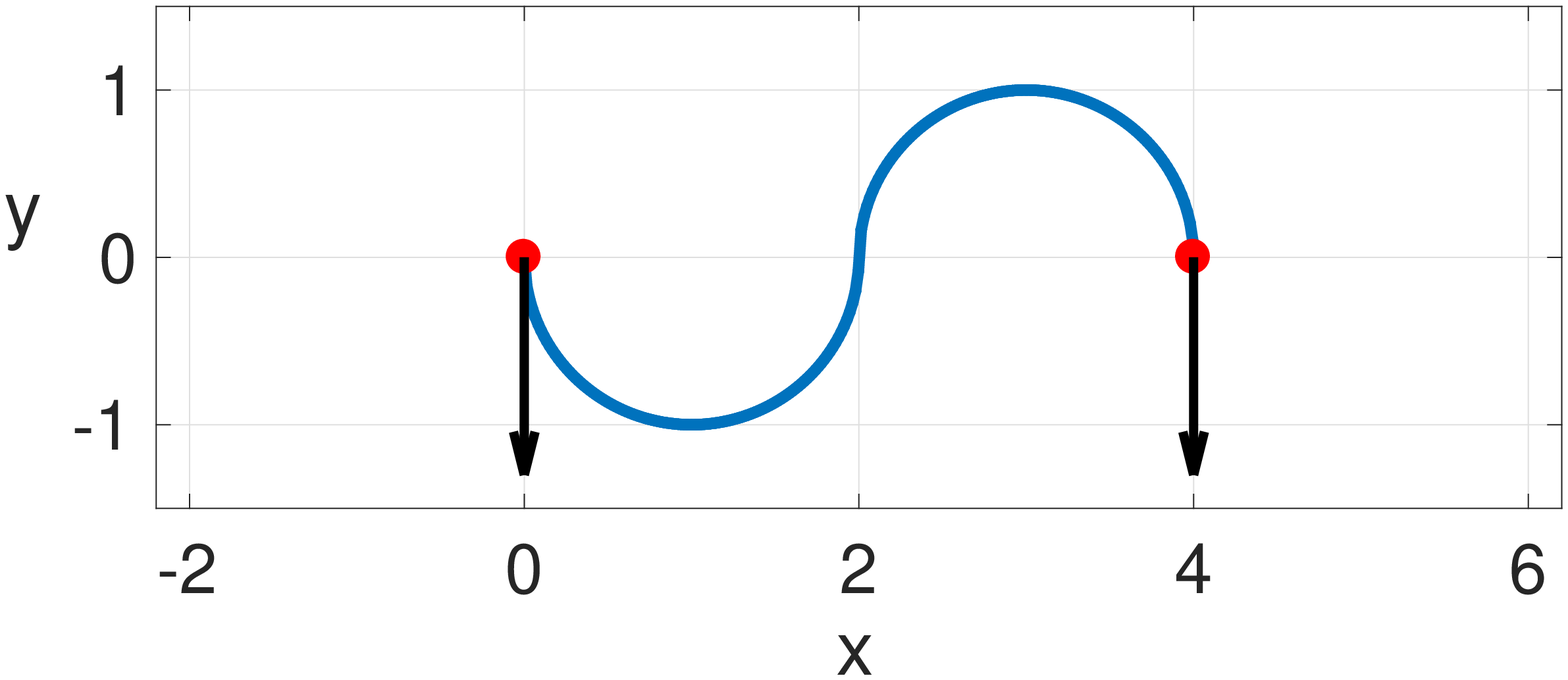} \\[3mm]
    (a) {\small Type $LR$;\ \ $t_f = 2\,\pi$}
\end{center}
\end{minipage}
\begin{minipage}{80mm}
\begin{center}
\psfrag{x}{\small $x$}
\psfrag{y}{\small $y$}
\includegraphics[width=80mm]{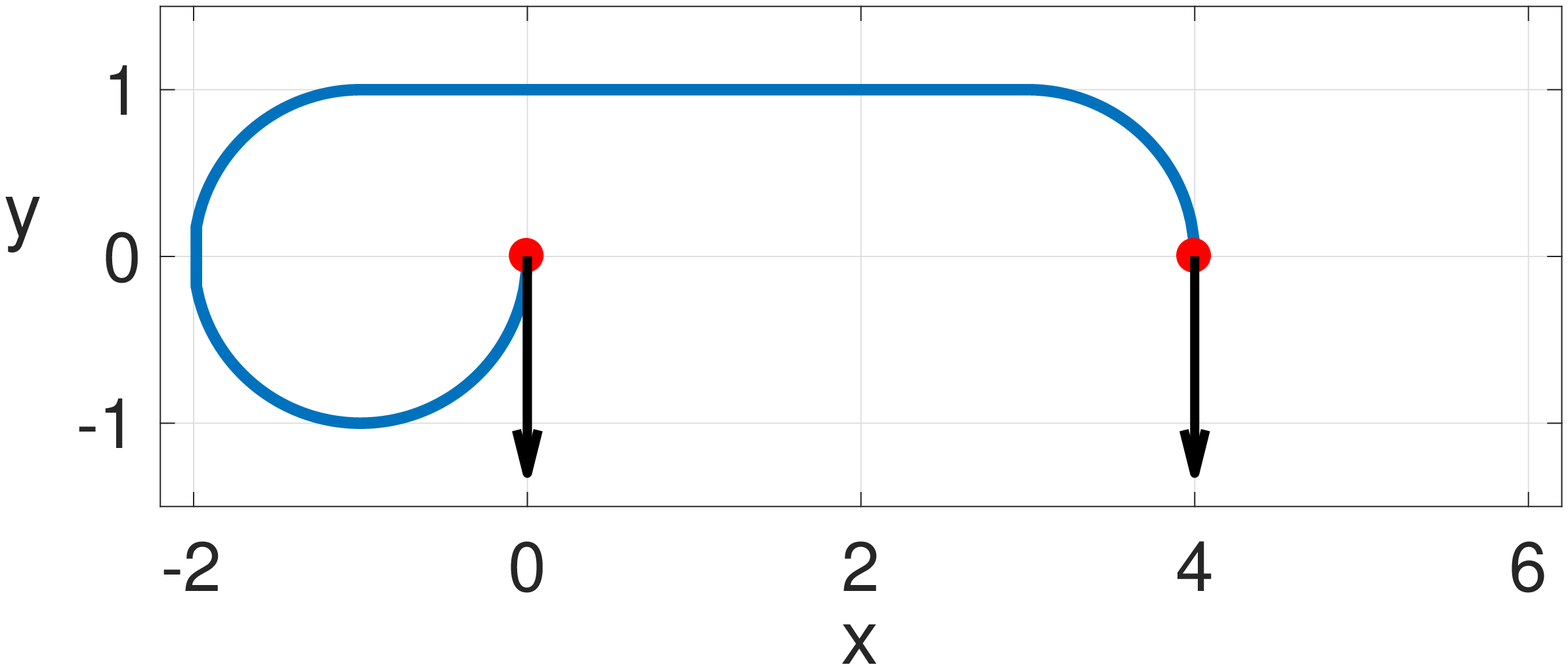} \\[3mm]
    (b) {\small Type $RSR$;\ \ $t_f = 2\,\pi + 4$}
\end{center}
\end{minipage}
\\[5mm]
\begin{minipage}{80mm}
\begin{center}
\psfrag{x}{\small $x$}
\psfrag{y}{\small $y$}
\includegraphics[width=80mm]{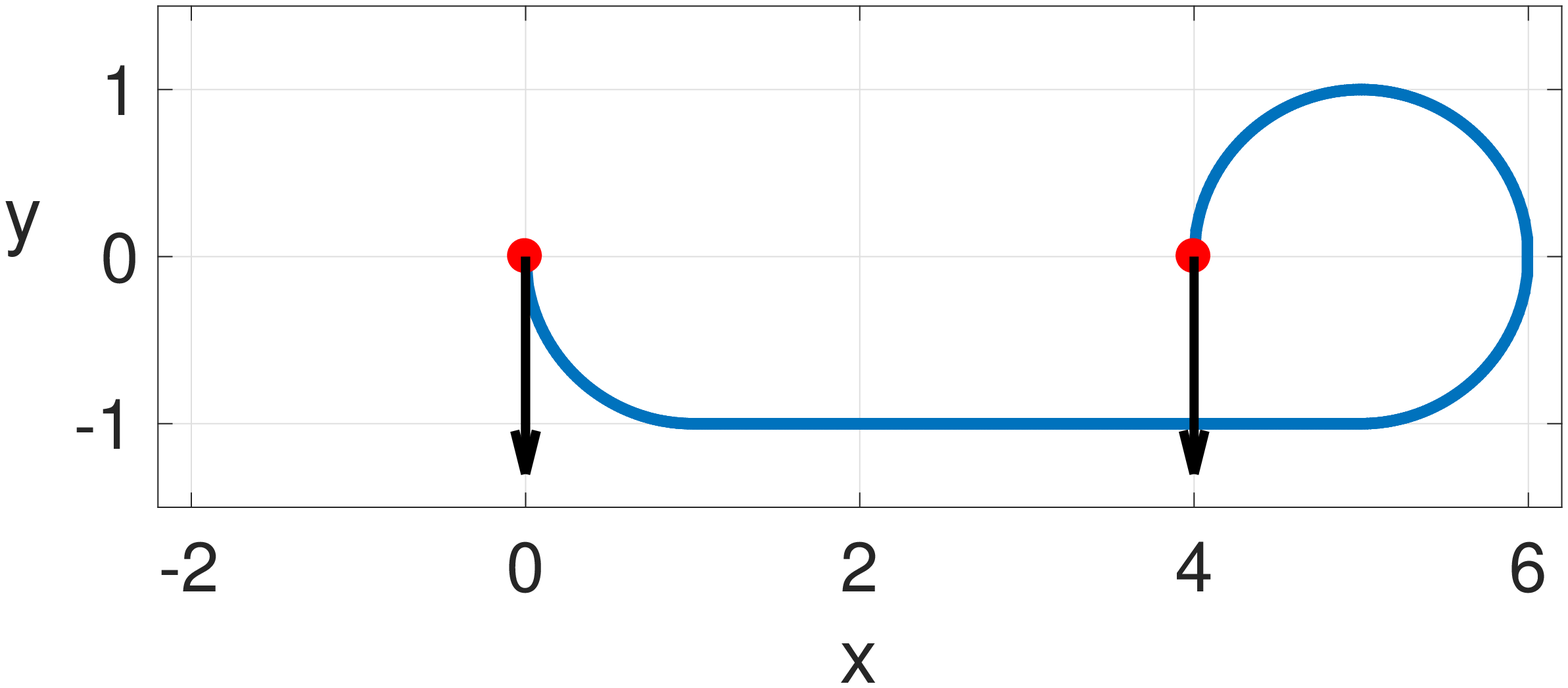} \\[3mm]
(c) {\small Type $LSL$;\ \ $t_f = 2\,\pi + 4$}
\end{center}
\end{minipage}
\begin{minipage}{80mm}
\begin{center}
\psfrag{x}{\small $x$}
\psfrag{y}{\small $y$}
\includegraphics[width=80mm]{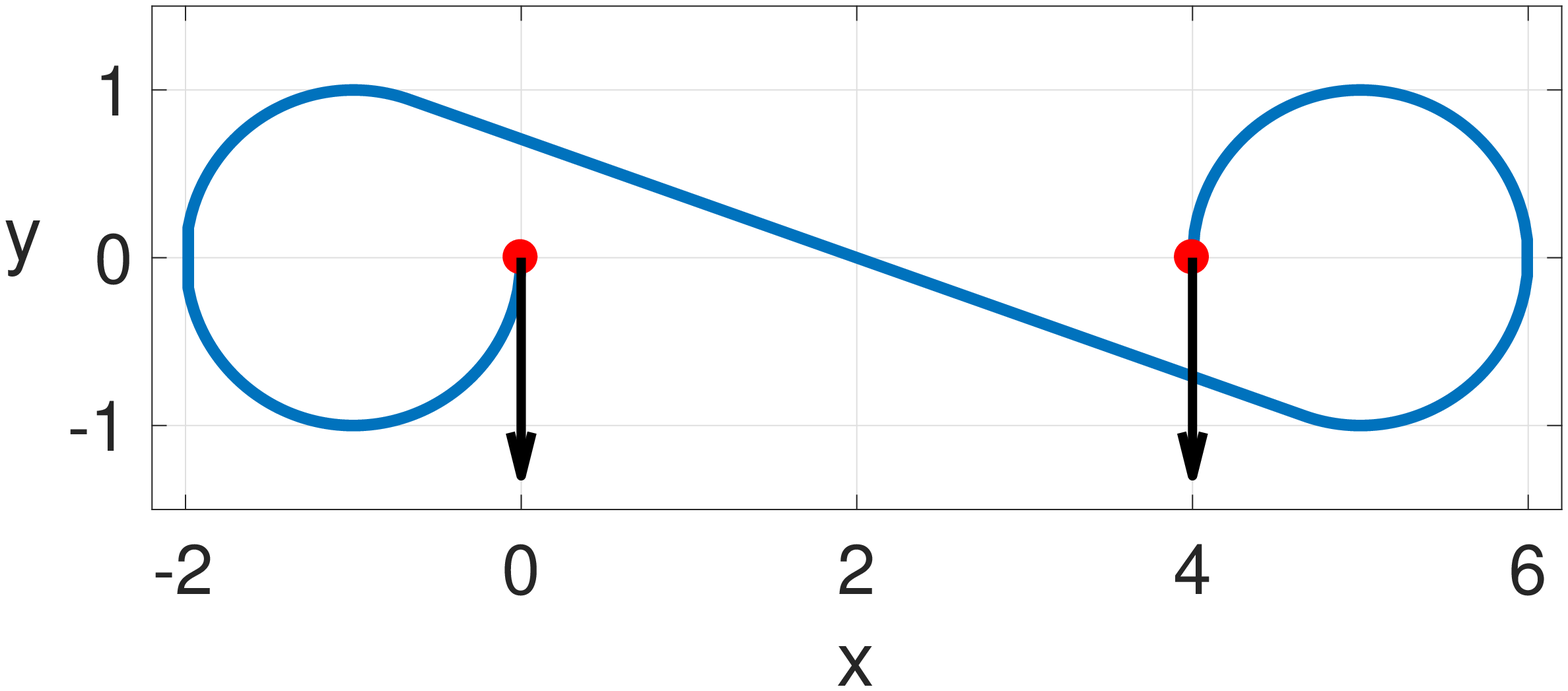} \\[3mm]
\hspace*{-6mm}(d) T{\small ype $RSL$;\ \ $t_f = 2[2\pi - \cos^{-1}(1/3) + 2\sqrt{2}]ñ$}
\end{center}
\end{minipage}
\
\caption{\small\sf (a) {\em Abnormal Markov-Dubins curve} from $(0,0,-\pi/2)$ to $(4,0,-\pi/2)$ with a maximum curvature of 3 units and (b)--(d) the other stationary curves between the same oriented points.}
\label{fig:MD_2pts_3}
\end{figure}

Figure~\ref{fig:MD_2pts_3} depicts all of the stationary solution curves for the given problem.  The one shown in (a) is a Markov-Dubins curve, for which $\lambda_0 = 0$, and so the curve is abnormal:  Note that the trajectory in Figure~\ref{fig:MD_2pts_3} corresponds to exactly one cycle along an ellipse in Figure~\ref{abnormal_phase} and that switching occurs when $\theta_1 = \pi/2$.  This is a situation that can also happen with $\lambda_0\neq0$, by Corollary~\ref{normal_abnormal_2}, and this can be verified by the phase portrait in Figure~\ref{phase}.   The adjoint variable, or the switching function, $\lambda_3(t)$, for the abnormal Markov-Dubins curve here can be found by using the formula in~\eqref{lambda3_CC_abnormal}.  By Corollary~\ref{normal_abnormal_2}, the abnormal Markov-Dubins curve here is also normal, and the associated switching function, $\lambda_3(t)$ can be obtained by using the formula in~\eqref{lambda3_CC}.

The adjoint variable $\lambda_3(t)$ for the stationary solutions depicted in Figure~\ref{fig:MD_2pts_2}(b)--(d), on the other hand, can be found by using the formula in~\eqref{lambda3_CSC}.  It should be noted that these solutions in Figure~\ref{fig:MD_2pts_2}(b)--(d) are only normal, for which $\lambda_0\neq0$.

\section{Conclusion and Further Work}

We have presented a study of the Markov-Dubins problem by employing optimal control theory, and reproduced Dubins' result (Theorem~\ref{Dubins}) which classifies the types of the shortest curves of bounded-curvature between two oriented points.  We have shown that the optimal control formulation of the problem can be abnormal, as well as normal (Lemmas~\ref{rho_nonsing} and~\ref{abnormal} and Corollaries~\ref{normal_abnormal_1} and~\ref{normal_abnormal_2}).  We characterized these solutions. We have also shown that feasible solutions of the types an optimal solution is required to be of are stationary (Theorem~\ref{stationarity}).  We presented a numerical method based on switching time parameterization, and applied it to two example problems to illustrate the method as well as the results on the normal Markov-Dubins paths.  With an additional example, we illustrated the presence of an abnormal Markov-Dubins path.

The results, as well as the numerical method, presented in this study constitute building blocks for a study of a generalization to {\em Markov-Dubins interpolating curves}, where the curve is required to pass through a number of additional intermediate points.  Work in this direction is in progress, incorporating additional techniques from optimal control theory available in \cite{AugMau2000,ClaVin1989,KayNoa2013}.

Theorem~\ref{stationarity} proves stationarity of feasible curves of certain types, but does not say anything about the local optimality of these curves.  The proof of Theorem~\ref{stationarity}, on the other hand, provides explicit expressions for the switching function in both the normal and abnormal cases.  These expressions could possibly be used in verifying the second-order sufficient conditions of optimality, which can be found in \cite{MauBueKimKay2005}, for curves of type $CCC$, i.e., for bang--bang type control.  To the author's knowledge, the only result on the local optimality of a stationary curve of the Markov-Dubins problem is provided in~\cite{AroBonDmiLot2012} for a curve of type $CS$, as a case study.

Another interesting extension of the results and the numerical method in the current paper would be a study of the Markov-Dubins problem in the three-dimensional space.  Sussmann presents in~\cite{Sussmann1995}  a study of the three-dimensional Markov-Dubins problem, by using geometric optimal control theory, and finds that the optimal path can include a helicoidal arc.

In practical situations, for example in the flight trajectory planning of a UAV, constraints are often imposed because of the terrain, flight traffic and no-go areas.  Therefore, yet another interesting extension of the current study would be to situations where constraints on the space are added, giving rise to state-constrained optimal control problems, which are significantly more challenging.

\section*{Acknowledgment}
The author would like to thank Helmut Maurer for pointing to Reference~\cite{AroBonDmiLot2012}, after a talk the author had presented on the topic.

\end{document}